\documentclass[12pt,twoside]{article}
\usepackage{amsmath,amsthm,amssymb,amscd,ascmac, amsfonts}
\usepackage{mathrsfs}
\usepackage{stmaryrd}
\usepackage{braket}
\usepackage{accents}

\numberwithin{equation}{section}
\usepackage[dvips]{graphicx,color,psfrag}


\makeatletter
\newcommand{\figcaption}[1]{\def\@captype{figure}\caption{#1}}
\newcommand{\tblcaption}[1]{\def\@captype{table}\caption{#1}}
\makeatother









\NeedsTeXFormat{LaTeX2e}[]
\ProvidesPackage{yoshinori}[2004/12/03 by yoshinori Yamasaki]

\makeatletter


\ifcase\@ptsize
\def\rpkern{\mathchoice{\kern-1.45em}{\kern-1.11em}{}{}}%
\def\grpkern{\mathchoice{\kern-1.013em}{\kern-0.825em}{}{}}%
\or
\def\rpkern{\mathchoice{\kern-1.44em}{\kern-1.11em}{}{}}%
\def\grpkern{\mathchoice{\kern-1.00em}{\kern-0.81em}{}{}}%
\or
\def\rpkern{\mathchoice{\kern-1.472em}{\kern-1.14em}{}{}}%
\def\grpkern{\mathchoice{\kern-1.00em}{\kern-0.815em}{}{}}%
\fi

\def\minibullet{\mathchoice%
{\raise0.2ex\hbox{$\scriptstyle\bullet$}}%
{\raise0.26ex\hbox{$\scriptscriptstyle\bullet$}}{}{}}
\def\butabullet{\mathchoice%
{\raise0.8ex\hbox{$\scriptstyle\bullet$}{\kern-0.365em}%
\lower0.4ex\hbox{$\scriptstyle\bullet$}}%
{\raise0.75ex\hbox{$\scriptscriptstyle\bullet$}{\kern-0.335em}%
\lower0.25ex\hbox{$\scriptscriptstyle\bullet$}}{}{}}

\def\customprod#1#2%
{\operatorname{\coprod\kern#2{#1}\kern#2\prod}\displaylimits}













\renewcommand{\Re}{\mathrm{Re}\,}
\renewcommand{\Im}{\mathrm{Im}\,}
\newcommand{\Res}[1]{\underset{#1}{\mathrm{Res}}\,}

\newcommand{\sgn}{\mathrm{sgn}\,}












\newtheorem*{multitheorem}{\variable@name}

\theoremstyle{definition}
\newcommand{\variable@name}{Theorem}
\newtheorem*{multiproclaim}{\variable@name}

\theoremstyle{plain}
\newtheorem{thm}{Theorem}[section]

\newtheorem{lem}[thm]{Lemma}

\newtheorem{prob}[thm]{Problem}

\theoremstyle{definition}

\newtheorem{rmk}[thm]{Remark}
\newtheorem{exa}[thm]{Example}


\textwidth=16.5 cm
\textheight=22 cm
\oddsidemargin= -2.5mm
\evensidemargin=-2.5mm

\topmargin=0.0 cm

\begin{document}
\title{New trigonometric identities and reciprocity laws of generalized Dedekind sums}
\author{Genki Shibukawa}
\date{
\small MSC classes\,:\,11L03, 11F20}
\pagestyle{plain}

\maketitle


\begin{abstract}
We obtain new trigonometric identities, which are some product-to-sum type formulas for the higher derivatives of the cotangent and cosecant functions. 
Further, from specializations of our formulas, we derive not only various known reciprocity laws of generalized Dedekind sums but also new reciprocity laws of generalized Dedekind sums. 
\end{abstract}

\section{Introduction}
From Dedekind, so-called {\it{Dedekind sums}} and their reciprocity laws have been studied by the distinguished mathematicians. 
For an overview of previously defined generalized Dedekind sums, we refer to a good interpretation by M.\,Beck (see Section\,1 and 2 in \cite{B}). 
Let $\cot^{(m)}$ denote the $m$-th derivative of the cotangent function. 
In \cite{B}, for $a_{0},a_{1},\ldots,a_{r},m_{0},m_{1},\ldots,m_{r} \in \mathbb{Z}_{\geq 1}, w_{0},w_{1},\ldots,w_{r} \in \mathbb{C}$, Beck introduced Dedekind cotangent sums
$$
\frac{1}{a_{0}^{m_{0}}}\sum_{k \,{\rm{mod}}\,a_{0}}\prod_{j=1}^{r}\cot^{(m_{j}-1)}\left(\!\pi \! \left(a_{j}\frac{k+w_{0}}{a_{0}}-w_{j}\!\right)\!\right),
$$
where the sum is taken over $k \,{\rm{mod}}\,a_{0}$ for which the summand is not singular. 
The Dedekind cotangent sums include as special cases various generalizations of Dedekind sums expressed by the cotangent functions and their higher derivatives. 
Moreover, under some conditions for $a_{0},\ldots,a_{r},w_{0},\ldots,w_{r}$, Beck computed the residue of 
$$
\cot^{(m_{0}-1)}(\pi (a_{0}z-w_{0}))\prod_{l=1}^{r}\cot^{(m_{l}-1)}(\pi (a_{l}z-w_{l}))
$$
and derived various reciprocity laws of the Dedekind cotangent sums, which are not only known results by Dedekind, Rademacher, Apostol, Carlitz, Mikol\'{a}s, Dieter, Zagier, but also the truly new ones. 
However, since his method needs a case analysis based on some conditions for singular points of an integrand function of residue calculus, we have to prove the reciprocity laws individually.

On the other hand, an analogue of the Dedekind sum which was formed by replacing the cotangent functions in the Dedekind sum by the cosecant functions, and its reciprocity laws were introduced and proved by Fukuhara \cite{F2}. 
For example, Fukuhara treated the following type formulas. Let $p$ and $q$ are relatively prime positive integers. 

\noindent
{\rm{(0)}}\,{\rm{(Proposition\,1.3 in \cite{F} or (1.1) in \cite{F2})}}
For any complex number $z$, 
\begin{align}
\label{eq:prot formula}
pq\cot{(pz)}\cot{(qz)}&=-\cot^{(1)}(z)-pq+q\sum_{\mu =1}^{p-1}\cot{\left(\frac{\pi q \mu }{p}\right)}\cot{\left(z-\frac{\pi \mu}{p}\right)} \nonumber \\
& \quad 
+p\sum_{\mu =1}^{q-1}\cot{\left(\frac{\pi p \mu}{q}\right)}\cot{\left(z-\frac{\pi \mu}{q}\right)}.
\end{align}

\noindent
{\rm{(1)}}\,{\rm{((1.2) in \cite{F2})}} 
If $q$ is even, then
\begin{align}
pq\cot{(pz)}\csc{(qz)}&=-\cot^{(1)}{(z)}+q\sum_{\mu =1}^{p-1}\csc{\left(\frac{\pi q \mu}{p}\right)}\cot{\left(z-\frac{\pi \mu}{p}\right)} \nonumber \\
\label{eq:another prot thm1.3}
& \quad +p\sum_{\mu =1}^{q-1}(-1)^{\mu}\cot{\left(\frac{\pi p \mu }{q}\right)}\cot{\left(z-\frac{\pi \mu}{q}\right)}.
\end{align}

\noindent
{\rm{(2)}}\,{\rm{((1.4) in \cite{F2})}} 
If $q$ is odd, then
\begin{align}
pq\cot{(pz)}\csc{(qz)}&=-\csc^{(1)}{(z)}+q\sum_{\mu =1}^{p-1}\csc{\left(\frac{\pi q \mu}{p}\right)}\csc{\left(z-\frac{\pi \mu}{p}\right)} \nonumber \\
\label{eq:another prot thm1.4}
& \quad +p\sum_{\mu =1}^{q-1}(-1)^{\mu}\cot{\left(\frac{\pi p \mu }{q}\right)}\csc{\left(z-\frac{\pi \mu}{q}\right)}.
\end{align}

\noindent
{\rm{(3)}}\,{\rm{((1.3) in \cite{F2})}} 
If $p+q$ is even, then 
\begin{align}
pq\csc{(pz)}\csc{(qz)}&=-\cot^{(1)}{(z)}+q\sum_{\mu =1}^{p-1}(-1)^{\mu}\csc{\left(\frac{\pi q \mu }{p}\right)}\cot{\left(z-\frac{\pi \mu}{p}\right)} \nonumber \\
\label{eq:another prot thm1.1}
& \quad +p\sum_{\mu =1}^{q-1}(-1)^{\mu}\csc{\left(\frac{\pi p \mu}{q}\right)}\cot{\left(z-\frac{\pi \mu}{q}\right)}. 
\end{align}

\noindent
{\rm{(4)}}\,{\rm{((1.5) in \cite{F2})}} 
If $p+q$ is odd, then
\begin{align}
pq\csc{(pz)}\csc{(qz)}&=-\csc^{(1)}{(z)}+q\sum_{\mu =1}^{p-1}(-1)^{\mu}\csc{\left(\frac{\pi q \mu }{p}\right)}\csc{\left(z-\frac{\pi \mu}{p}\right)} \nonumber \\
\label{eq:another prot thm1.2}
& \quad +p\sum_{\mu =1}^{q-1}(-1)^{\mu}\csc{\left(\frac{\pi p \mu}{q}\right)}\csc{\left(z-\frac{\pi \mu}{q}\right)}, 
\end{align}
where $\csc^{(1)}{(z)}$ is the derivative of $\csc{(z)}$.

He also pointed out that these formulas can be regarded as a one parameter deformation of the reciprocity laws of some Dedekind sums, or a generating function of the reciprocity laws of some Dedekind-Apostol sums. 
Actually, in (\ref{eq:prot formula}), by comparing the coefficients of the Laurent expansion of (\ref{eq:prot formula}) at $z=0$, 
we obtain the reciprocity laws of the Dedekind-Apostol sums 
$$
s_{N}(q;p):=\frac{1}{2^{N+1}p}\sum_{\mu =1}^{p-1}\cot{\left(\frac{\pi q \mu}{p}\right)}\cot^{(N-1)}{\left(\frac{\pi \mu}{p}\right)} 
$$
as follows. 
\begin{align}
s_{1}(q;p)+s_{1}(p;q)&=\frac{p^{2}+q^{2}+1-3pq}{12pq}, \\
s_{2k+1}(q;p)+s_{2k+1}(p;q)
&=\frac{1}{2pq}\frac{B_{2k+2}}{k+1}+\frac{B_{2k+2}}{(2k+1)(2k+2)}(p^{2k+1}q^{-1}+p^{-1}q^{2k+1}) \nonumber \\
& \quad -(2k)!\sum_{l=1}^{k}\frac{B_{2l}B_{2k+2-2l}}{(2l)!(2k+2-2l)!}p^{2l-1}q^{2k+1-2l}. 
\end{align}
Here, $k$ is a positive integer and $\{B_{m}\}_{m=0,1,\ldots}$ are the Bernoulli numbers defined by 
$$
\frac{t}{e^{t}-1}=\sum_{m=0}^{\infty}\frac{B_{m}}{m!}t^{m}.
$$

As described above, from some product-to-sum type formulas for some trigonometric functions, like (\ref{eq:prot formula}), 
we easily obtain the reciprocity laws for various generalized Dedekind sums. 
In this article, taking into account the investigations, we present a detailed calculation of 
$$
\prod_{l=1}^{j_{I}}a_{l}^{m_{l}}\cot^{(m_{l}-1)}(\pi (a_{l}z-w_{l}))
\prod_{l=j_{I}+1}^{j_{I}+j_{I\hspace{-.1em}I}}a_{l}^{m_{l}}\csc^{(m_{l}-1)}(\pi (a_{l}z-w_{l}))
$$
and give a sum expression of the higher derivatives for the cotangent and cosecant functions, which can be regarded as a product-to-sum type formula for the higher derivatives of the cotangent and cosecant functions. 
We prove it under the completely generic condition, and only use Liouville's theorem and limit of some periodic functions at $z \to i\infty$. 
Thus, our proof is more generic than the method of Beck, and much simpler than Fukuhara's proof which needs some non-trivial trigonometric identities. 
Furthermore, from various specializations of our formula, we derive various reciprocity laws of the generalized Dedekind sums uniformly, which include the results in \cite{B} and \cite{F2} et al..

Let us now describe the content in this article. 
In Section\,2, we introduce the main object $\varphi_{N}^{(I)}(z)$ and $\varphi_{N}^{(I\hspace{-.1em}I)}(z)$ instead of the higher derivatives of the cotangent and cosecant functions, and recall their fundamental properties. 
In Section\,3 which is the main part of this article, under the general situation for the parameters, we provide a product-to-sum type formula for 
$$
\prod_{l=1}^{j_{I}}a_{l}^{m_{l}}\varphi_{m_{l}}^{(I)}(a_{l}z-w_{l})
\prod_{l=j_{I}+1}^{j_{I}+j_{I\hspace{-.1em}I}}a_{l}^{m_{l}}\varphi_{m_{l}}^{(I\hspace{-.1em}I)}(a_{l}z-w_{l})
$$
and derive new generalized reciprocity laws by writing down some specializations of the main theorem. 
In Section\,4, we restrict parameters of our main results in Section\,3 and give more explicit expression of our reciprocity laws. 
By these specializations, we show that our main results contain a lot of formulas for the generalized Dedekind sums by the distinguished mathematicians. 
Finally, in Section\,5, we present a future work for a variation on a theme of our formulas.

\section{Preliminaries}
Throughout the paper, we denote the ring of rational integers by $\mathbb{Z}$, 
the field of real numbers by $\mathbb{R}$, the field of complex numbers by $\mathbb{C}$ and $i:=\sqrt{-1}$. 
Further we use the notation:  
$$
\mathfrak{R}:=\{z \in \mathbb{C} \mid 0\leq \Re{z}< 1 \}.
$$
First, from Walker's book \cite{W}, we recall the two kinds of the periodic functions which play central roles in this article. 
For a positive integer $N$, we define the periodic functions by 
\begin{equation}
\varphi_{N}^{(J)}(z):=\frac{1}{z^{N}}+\sum_{n=1}^{\infty}(-1)^{n\delta_{J,I\hspace{-.1em}I}}\left(\frac{1}{(z+n)^{N}}+\frac{1}{(z-n)^{N}}\right)\,\,\,\,(J=I,\,I\hspace{-.1em}I).
\end{equation}
In \cite{W},  $\varphi_{N}^{(I)}(z)$ and $\varphi_{N}^{(I\hspace{-.1em}I)}(z)$ are denoted by $E_{N}(z)$ and $G_{N}(z)$ respectively. 
In the following, we list the main properties of $\varphi_{N}^{(J)}(z)$ ($J=I,I\hspace{-.1em}I$) from \cite{W}.

\noindent
\underline{Periodicity}\,
For any $\mu \in \mathbb{Z}$, 
\begin{equation}
\label{eq:period}
\varphi_{N}^{(J)}(z+\mu)=(-1)^{\mu\delta_{J,I\hspace{-.1em}I}}\varphi_{N}^{(J)}(z).
\end{equation}

\noindent
\underline{Derivation}\,
For any $N \in \mathbb{Z}_{\geq 0}$, 
\begin{equation}
\varphi_{N+1}^{(J)}(z)=\frac{(-1)^{N}}{N!}\left(\frac{d}{dz}\right)^{N}\varphi_{1}^{(J)}(z).
\end{equation}
In particular, 
\begin{equation}
\frac{d\varphi_{N}^{(J)}}{dz}(z)=-N\varphi_{N+1}^{(J)}(z).
\end{equation}

\noindent
\underline{Laurent expansions}\,
Let $\zeta(s)$ be the Riemann zeta function and 
\begin{align}
\alpha_{\mu}^{(I)}
&:=
\begin{cases}
    2\zeta(\mu)=(-1)^{\frac{\mu}{2}+1}\frac{B_{\mu}}{\mu !}(2\pi)^{\mu}  & (\text{if $\mu$ is even}) \\
    0  & (\text{if $\mu$ is odd})
\end{cases}, \\
\alpha_{\mu}^{(I\hspace{-.1em}I)}
&:=
\begin{cases}
    2(1-2^{1-\mu})\zeta(\mu)=2(2^{\mu-1}-1)(-1)^{\frac{\mu}{2}+1}\frac{B_{\mu}}{\mu !}\pi^{\mu} & (\text{if $\mu$ is even}) \\
    0  & (\text{if $\mu$ is odd})
\end{cases}.
\end{align}
Then, around $z=0$, we have
\begin{equation}
\label{eq:Laurent expansion prot}
\varphi_{N}^{(J)}(z)=\frac{1}{z^{N}}+(-1)^{N}\sum_{\nu \geq 0}\binom{N+\nu -1}{N-1}\alpha_{N+\nu}^{(J)}z^{\nu},  
\end{equation}
where $\binom{N+\nu -1}{N-1}$ is the binomial coefficient. 
More generally, the following result holds. 
For $X \subset \mathbb{C}$, we put 
$$
\delta_{X}(z):= \begin{cases}
    1 & ({\text{if $z \in X$}} ) \\
    0 & ({\text{if $z \not\in X$}} )
  \end{cases}.
$$
and define the signature by
\begin{equation}
\label{eq:def of sgn}
\sgn^{(J)}(z_{0} \,;a,w):=(-1)^{(az_{0}-w)\delta_{J,I\hspace{-.1em}I}}\delta_{\mathbb{Z}}(az_{0}-w).
\end{equation}
By the periodicity (\ref{eq:period}), if $\delta_{\mathbb{Z}}(az_{0}-w)=1$, then 
$$
\varphi_{N}^{(J)}(z-(az_{0}-w))=\sgn^{(J)}(z_{0} \,;a,w)\varphi_{N}^{(J)}(z).
$$
\begin{lem}
For any $a,m \in \mathbb{Z}_{\geq 1}$ and $w,z_{0} \in \mathbb{C}$, we have
\begin{equation}
\label{eq:fund prop}
a^{m}\varphi_{m}^{(J)}(az-w)
=\sgn^{(J)}(z_{0} \,;a,w)\left(z-z_{0}\right)^{-m}
+\sum_{\nu \geq 0}
A_{\nu}^{(J)}(z_{0} \,;a,m,w)\left(z-z_{0}\right)^{\nu}.
\end{equation}
Here, 
\begin{align}
(m)_{\nu}:=&\begin{cases}
    m(m+1)\cdots(m+\nu -1) & ({\text{if $\nu \geq 1$}}) \\
    1 & ({\text{if $\nu =0$}}) 
  \end{cases}. \nonumber \\
A_{\nu}^{(J)}(z_{0} \,;a,m,w)
:=&(-1)^{m}\sgn^{(J)}(z_{0} \,;a,w)\binom{m+\nu-1}{m-1}\alpha_{m+\nu}^{(J)}a^{m+\nu}\delta_{\mathbb{Z}}(az_{0}-w) \nonumber \\
&+(-1)^{\nu}\frac{(m)_{\nu}}{\nu!}a^{m+\nu}\Res{z=z_{0}}\frac{\varphi_{m+\nu}^{(J)}\left(az-w \right)}{z-z_{0}}\,dz\,(1-\delta_{\mathbb{Z}}(az_{0}-w)) \nonumber \\
=& \begin{cases}
    (-1)^{m}\sgn^{(J)}(z_{0} \,;a,w)\binom{m+\nu-1}{m-1}\alpha_{m+\nu}^{(J)}a^{m+\nu} & ({\text{if $\delta_{\mathbb{Z}}(az_{0}-w)=1$}}) \\
    (-1)^{\nu}\frac{(m)_{\nu}}{\nu!}\varphi_{m+\nu}^{(J)}\left(az_{0}-w \right)a^{m+\nu} & ({\text{if $\delta_{\mathbb{Z}}(az_{0}-w)=0$}}) 
  \end{cases}. \nonumber
\end{align}
\end{lem}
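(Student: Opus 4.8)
The plan is to compute the expansion of $a^{m}\varphi_{m}^{(J)}(az-w)$ about $z=z_{0}$ directly, separating the two cases recorded in the definition of $A_{\nu}^{(J)}$ according to whether or not $az_{0}-w$ is an integer.

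First suppose $\delta_{\mathbb{Z}}(az_{0}-w)=1$, and set $n_{0}:=az_{0}-w\in\mathbb{Z}$, so that $z=z_{0}$ is a pole. I would write $az-w=n_{0}+a(z-z_{0})$ and apply the periodicity (\ref{eq:period}) to obtain
$$
\varphi_{m}^{(J)}(az-w)=\varphi_{m}^{(J)}\bigl(n_{0}+a(z-z_{0})\bigr)=(-1)^{n_{0}\delta_{J,I\hspace{-.1em}I}}\varphi_{m}^{(J)}\bigl(a(z-z_{0})\bigr).
$$
Then I insert the Laurent expansion (\ref{eq:Laurent expansion prot}) with argument $a(z-z_{0})$, multiply through by $a^{m}$, and track the scaling: the leading term contributes $a^{-m}(z-z_{0})^{-m}$, while the $\nu$-th regular term acquires a factor $a^{\nu}$. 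Since $\sgn^{(J)}(z_{0}\,;a,w)=(-1)^{n_{0}\delta_{J,I\hspace{-.1em}I}}$ here, the pole term becomes $\sgn^{(J)}(z_{0}\,;a,w)(z-z_{0})^{-m}$ and the coefficient of $(z-z_{0})^{\nu}$ is $(-1)^{m}\sgn^{(J)}(z_{0}\,;a,w)\binom{m+\nu-1}{m-1}\alpha_{m+\nu}^{(J)}a^{m+\nu}$, which is the first branch of $A_{\nu}^{(J)}$.

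Next suppose $\delta_{\mathbb{Z}}(az_{0}-w)=0$. Then $az_{0}-w$ is not a pole of $\varphi_{m}^{(J)}$, so $a^{m}\varphi_{m}^{(J)}(az-w)$ is holomorphic at $z=z_{0}$ and coincides with its Taylor series there, and the singular coefficient vanishes because $\sgn^{(J)}(z_{0}\,;a,w)=0$. To get the Taylor coefficients I would combine the derivation rule $\tfrac{d}{dz}\varphi_{N}^{(J)}=-N\varphi_{N+1}^{(J)}$ with the chain rule and prove by induction on $\nu$ that
$$
\frac{d^{\nu}}{dz^{\nu}}\varphi_{m}^{(J)}(az-w)=(-1)^{\nu}a^{\nu}(m)_{\nu}\,\varphi_{m+\nu}^{(J)}(az-w),
$$
the Pochhammer symbol $(m)_{\nu}$ accumulating because differentiating $\varphi_{m+k}^{(J)}(az-w)$ brings down the factor $-a(m+k)$ for $k=0,1,\dots,\nu-1$. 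Dividing by $\nu!$ and evaluating at $z_{0}$ yields the coefficient $(-1)^{\nu}\tfrac{(m)_{\nu}}{\nu!}a^{m+\nu}\varphi_{m+\nu}^{(J)}(az_{0}-w)$, the second branch of $A_{\nu}^{(J)}$. The residue form in the definition is then immediate, since $\varphi_{m+\nu}^{(J)}(az-w)/(z-z_{0})$ has a simple pole at $z=z_{0}$ with residue $\varphi_{m+\nu}^{(J)}(az_{0}-w)$.

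Both cases are essentially mechanical, so I do not expect a genuine obstacle. The only points demanding care will be the sign bookkeeping that identifies $(-1)^{n_{0}\delta_{J,I\hspace{-.1em}I}}$ with $\sgn^{(J)}(z_{0}\,;a,w)$ in the pole case, and keeping the $a^{\nu}$ factors coming from the argument scaling $az-w=n_{0}+a(z-z_{0})$ aligned with the index $\nu$ so that the binomial and Pochhammer coefficients land in the right place.
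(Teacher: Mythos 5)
Your proposal is correct and follows essentially the same route as the paper's proof: in the non-integer case the Taylor expansion via the derivative rule $\frac{d^{\nu}}{dz^{\nu}}\varphi_{m}^{(J)}(az-w)=(-1)^{\nu}a^{\nu}(m)_{\nu}\varphi_{m+\nu}^{(J)}(az-w)$, and in the integer case writing $z_{0}=(w+\mu)/a$, applying the periodicity (\ref{eq:period}) to reduce to $\varphi_{m}^{(J)}(a(z-z_{0}))$, and inserting the Laurent expansion (\ref{eq:Laurent expansion prot}). Your sign and scaling bookkeeping, including the identification of $(-1)^{n_{0}\delta_{J,I\hspace{-.1em}I}}$ with $\sgn^{(J)}(z_{0}\,;a,w)$ and the $a^{m+\nu}$ factors, matches the paper exactly.
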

\begin{proof}
If $az_{0}-w \in \mathbb{C}\setminus \mathbb{Z}$, that means $\delta_{\mathbb{Z}}(az_{0}-w)=0$, then $z_{0}$ is not a pole of $a^{m}\varphi_{m}^{(J)}(az-w)$ and  
$$
\left(\frac{d}{dz}\right)^{\nu}a^{m}\varphi_{m}^{(J)}(az-w)\biggm|_{z=z_{0}}
=(-1)^{\nu}(m)_{\nu}\varphi_{m+\nu}^{(J)}(az_{0}-w)a^{m+\nu}.
$$
Thus, from the Taylor expansion of $a^{m}\varphi_{m}^{(J)}(az-w)$ at $z=z_{0}$, we have
$$
a^{m}\varphi_{m}^{(J)}(az-w)=\sum_{\nu \geq 0}(-1)^{\nu}\frac{(m)_{\nu}}{\nu!}\varphi_{m+\nu}^{(J)}(az_{0}-w)a^{m+\nu}(z-z_{0})^{\nu}.
$$
If $az_{0}-w \in \mathbb{Z}$, that is $\delta_{\mathbb{Z}}(az_{0}-w)=1$ case, then there exists $\mu \in \mathbb{Z}$ such that 
$$
z_{0}=\frac{w+\mu}{a}.
$$
Hence, by using the periodicity (\ref{eq:period}) and the Laurent expansion at $z_{0}=0$ (\ref{eq:Laurent expansion prot}), we have
\begin{align}
a^{m}\varphi_{m}^{(J)}(az-w)
&=\sgn^{(J)}(z_{0} \,;a,w)a^{m}\varphi_{m}^{(J)}(az-w-(az_{0}-w)) \nonumber \\
&=\sgn^{(J)}(z_{0} \,;a,w)a^{m}\varphi_{m}^{(J)}(a(z-z_{0})) \nonumber \\
&=\sgn^{(J)}(z_{0} \,;a,w)\left(z-z_{0}\right)^{-m} \nonumber \\
& \quad +(-1)^{m}\sgn^{(J)}(z_{0} \,;a,w)\sum_{\nu \geq 0}\binom{m+\nu-1}{m-1}\alpha_{m+\nu}^{(J)}a^{m+\nu}(z-z_{0})^{\nu}. \nonumber
\end{align}
\end{proof}

\noindent
\underline{Relationship with the cotangent and cosecant functions}\,
\begin{equation}
\varphi_{1}^{(I)}(z)=\pi \cot(\pi z),\,\,
\varphi_{1}^{(I\hspace{-.1em}I)}(z)=\pi \csc(\pi z).
\end{equation}


\noindent
\underline{Limit at $z \to i\infty$}\,
\begin{lem}
\begin{equation}
\label{eq:limit of Phi 0}
\lim_{z \to i\infty}\varphi_{N}^{(J)}(z)=-\pi{i}\delta_{N,1}\delta_{J,I}.
\end{equation}
\end{lem}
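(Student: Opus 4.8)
The plan is to reduce everything to the case $N=1$ by means of the derivation property, and then to read off the two base limits from the Fourier (or $q$-) expansions of the cotangent and cosecant. Since each $\varphi_N^{(J)}$ is periodic up to sign under $z\mapsto z+1$ by the periodicity relation, the symbol $z\to i\infty$ is understood as $\Im z\to+\infty$, and all limits below are meant uniformly in $\Re z$ on a fixed strip.

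First I would treat $N=1$. Using the relationship $\varphi_1^{(I)}(z)=\pi\cot(\pi z)$ and writing $q:=e^{2\pi i z}$, a direct manipulation of exponentials gives
\[
\varphi_1^{(I)}(z)=\pi\cot(\pi z)=-\pi i-2\pi i\sum_{k=1}^{\infty}q^{k},
\]
which converges uniformly on each half-plane $\{\Im z\ge y_{0}\}$, $y_{0}>0$. As $\Im z\to+\infty$ we have $|q|=e^{-2\pi\Im z}\to0$, so the sum vanishes and $\varphi_1^{(I)}(z)\to-\pi i$. Similarly, setting $p:=e^{i\pi z}$ and using $\varphi_1^{(II)}(z)=\pi\csc(\pi z)$ yields
\[
\varphi_1^{(II)}(z)=\pi\csc(\pi z)=-2\pi i\sum_{k=0}^{\infty}p^{2k+1},
\]
which has no constant term; hence $\varphi_1^{(II)}(z)\to0$. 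This already establishes both $N=1$ cases and matches $-\pi i\,\delta_{J,I}$.

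For $N\ge2$ I would invoke the derivation property to write $\varphi_N^{(J)}=\tfrac{(-1)^{N-1}}{(N-1)!}\bigl(\tfrac{d}{dz}\bigr)^{N-1}\varphi_1^{(J)}$ and differentiate the two expansions above term by term. Differentiation annihilates the constant $-\pi i$ occurring for $J=I$, and sends $q^{k}\mapsto(2\pi i k)^{N-1}q^{k}$ and $p^{2k+1}\mapsto\bigl((2k+1)\pi i\bigr)^{N-1}p^{2k+1}$; the resulting series still converge uniformly on $\{\Im z\ge y_{0}\}$ and every term tends to $0$ as $\Im z\to+\infty$. Consequently $\varphi_N^{(J)}(z)\to0$ for all $N\ge2$ and both $J$, which is exactly $-\pi i\,\delta_{N,1}\delta_{J,I}$ in these cases.

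The only delicate point is the justification of the term-by-term differentiation together with the uniform decay of the differentiated series, and I expect this to be the main (though routine) obstacle; it is handled by the uniform convergence of the series $\sum_{k}k^{N-1}r^{k}$ with $r=|q|$ or $r=|p|$ bounded away from $1$ on $\{\Im z\ge y_{0}\}$, so that the differentiated expansions converge uniformly there and may be evaluated termwise in the limit. An alternative, avoiding the Fourier expansions entirely for $N\ge2$, is to estimate directly from the defining series: one bounds $|\varphi_N^{(J)}(z)|$ by $\sum_{n\in\mathbb{Z}}|z+n|^{-N}$ and uses $|z+n|^{2}=(\Re z+n)^{2}+(\Im z)^{2}\ge(\Im z)^{2}$ to show this is $O\bigl((\Im z)^{-(N-1)}\bigr)$, while the conditionally convergent case $N=1$ is again dispatched through the closed cotangent and cosecant forms above.
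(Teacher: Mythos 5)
Your proof is correct, but for $N\geq 2$ it proceeds by a genuinely different mechanism than the paper. The paper disposes of $N\geq 2$ in one line: the defining series for $\varphi_{N}^{(J)}(z)$ converges absolutely (uniformly for $\Im z$ large), so the limit may be taken termwise and every term vanishes; for $N=1$ it simply quotes the closed forms $\varphi_{1}^{(I)}(z)=\pi\cot(\pi z)$ and $\varphi_{1}^{(I\hspace{-.1em}I)}(z)=\pi\csc(\pi z)$ and their elementary limits. You instead reduce $N\geq 2$ to $N=1$ through the derivation property and differentiate the exponential expansions termwise; your expansions $\pi\cot(\pi z)=-\pi i-2\pi i\sum_{k\geq 1}q^{k}$ and $\pi\csc(\pi z)=-2\pi i\sum_{k\geq 0}p^{2k+1}$ are correct, the termwise differentiation is legitimately justified by the uniform convergence of $\sum_{k}k^{N-1}r^{k}$ with $r$ bounded away from $1$ on $\{\Im z\geq y_{0}\}$, and the annihilation of the constant $-\pi i$ by $d/dz$ yields exactly $-\pi i\,\delta_{N,1}\delta_{J,I}$. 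What your route buys is an explicit exponential decay rate $O(e^{-2\pi\Im z})$ (resp.\ $O(e^{-\pi\Im z})$) together with uniformity in $\Re z$, which is precisely the property used silently in the boundedness step of the main theorem's proof; the paper's route is shorter but purely qualitative. One small repair to your alternative argument: the bound $|z+n|\geq \Im z$ alone is insufficient, since it leaves infinitely many terms each of size $(\Im z)^{-N}$; to reach $O\bigl((\Im z)^{1-N}\bigr)$ you should split the sum according to whether $|\Re z+n|\leq \Im z$ or compare with $\int_{\mathbb{R}}\bigl(t^{2}+(\Im z)^{2}\bigr)^{-N/2}\,dt$ — and once repaired, this alternative is exactly the paper's absolute-convergence argument made quantitative.
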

\begin{proof}
For $N\geq 2$, since $\varphi_{N}^{(J)}(z)$ is absolutely convergent, $\lim_{z \to i\infty}\varphi_{N}^{(J)}(z)=0$. 
If $N=1$, then 
\begin{align}
\lim_{z \to i\infty}\varphi_{1}^{(I)}(z)&=\lim_{z \to i\infty}\pi \cot{(\pi z)}=-\pi {i}, \nonumber \\
\lim_{z \to i\infty}\varphi_{1}^{(I\hspace{-.1em}I)}(z)&=\lim_{z \to i\infty}\pi \csc{(\pi z)}=0. \nonumber
\end{align}
\end{proof}

\section{Main results}
Let $r \in \mathbb{Z}_{\geq 2}, [r]:=\{1,\ldots,r\}, \mathbf{a}:=(a_{1},\ldots,a_{r}),\mathbf{m}:=(m_{1},\ldots,m_{r}) \in \mathbb{Z}_{\geq 1}^{r}, \mathbf{w}=(w_{1},\ldots,w_{r}) \in \mathfrak{R}^{r},\mathbf{j}=(j_{I},j_{I\hspace{-.1em}I}) \in \mathbb{Z}_{\geq 0}^{2}$. 
Here, suppose that $j_{I}+j_{I\hspace{-.1em}I}=r$. 
Further, we put   
\begin{align}
R_{\rho}&:=(R_{\rho}(\mathbf{a},\mathbf{w})=)\{\Lambda \subset [r] \mid \delta_{\mathbb{Z}}(a_{\lambda}\rho -w_{\lambda})=1 \,(\,{\text{for all}}\, \lambda \in \Lambda)\}, \nonumber \\
\Lambda^{c}&:=[r]\setminus \Lambda, \nonumber \\ 
K_{n,\Lambda}^{\pm}&:=\left\{(\nu_{k})_{k \in \Lambda^{c}}\in \mathbb{Z}_{\geq 0}^{|\Lambda^{c}|} \Bigg\vert n=\pm\left(\sum_{k \in \Lambda^{c}}\nu_{k} -\sum_{\lambda \in \Lambda}m_{\lambda}\right)\right\}, \nonumber \\
\delta^{(\mathbf{j},l)}_{I}&:=\sum_{j=1}^{j_{I}}\delta_{j,l}=\begin{cases}
    1 & ({\text{if }} 1\leq l\leq j_{I}) \\
    0 & ({\text{otherwise}})
  \end{cases}, \,\,
\delta^{(\mathbf{j},l)}_{I\hspace{-.1em}I}:=\sum_{j=j_{I}+1}^{j_{I}+j_{I\hspace{-.1em}I}}\delta_{j,l}=\begin{cases}
    1 & ({\text{if }} j_{I}+1\leq l\leq j_{I}+j_{I\hspace{-.1em}I}) \\
    0 & ({\text{otherwise}})
  \end{cases}, \nonumber
\end{align}
and
\begin{align}
\sgn^{(\mathbf{j},l)}(z_{0} \,;a,w)
:=&\sgn^{(I)}(z_{0} \,;a,w)\delta^{(\mathbf{j},l)}_{I}+\sgn^{(I\hspace{-.1em}I)}(z_{0} \,;a,w)\delta^{(\mathbf{j},l)}_{I\hspace{-.1em}I} \nonumber \\
=&(-1)^{(az_{0}-w)\delta^{(\mathbf{j},l)}_{I\hspace{-.1em}I}}\delta_{\mathbb{Z}}(az_{0}-w), \nonumber \\
\varphi_{N}^{(\mathbf{j},l)}(z)
:=&\varphi_{N}^{(I)}(z)\delta^{(\mathbf{j},l)}_{I}+\varphi_{N}^{(I\hspace{-.1em}I)}(z)\delta^{(\mathbf{j},l)}_{I\hspace{-.1em}I}, \nonumber \\
\alpha_{\nu}^{(\mathbf{j},l)}
:=& \alpha_{\nu}^{(I)}\delta^{(\mathbf{j},l)}_{I}+\alpha_{\nu}^{(I\hspace{-.1em}I)}\delta^{(\mathbf{j},l)}_{I\hspace{-.1em}I}, \nonumber \\
A_{\nu}^{(\mathbf{j},l)}(z_{0} \,;a,m,w)
:=&A_{\nu}^{(I)}(z_{0} \,;a,m,w)\delta^{(\mathbf{j},l)}_{I}+A_{\nu}^{(I\hspace{-.1em}I)}(z_{0} \,;a,m,w)\delta^{(\mathbf{j},l)}_{I\hspace{-.1em}I}. \nonumber
\end{align}
Moreover, for convenience, we consider the following two cases according to $\mathbf{a}$ and $\mathbf{j}$.  
\begin{align}
{\text{Case.\,$I$}}&:j_{I\hspace{-.1em}I}=0,\,\,{\text{or}}\,\,\sum_{l=j_{I}+1}^{r}a_{l} \,\,\,\text{is even.} \nonumber \\
{\text{Case.\,$I\hspace{-.1em}I$}}&:\sum_{l=j_{I}+1}^{r}a_{l} \,\,\,\text{is odd.} \nonumber 
\end{align}
The following theorem is the main result of this article. 
\begin{thm}
\label{prop:main theorem}
Let 
$$
\Phi^{(\mathbf{j})}(z\,;\mathbf{a},\mathbf{m},\mathbf{w})
:=\prod_{l=1}^{r}a_{l}^{m_{l}}\varphi_{m_{l}}^{(\mathbf{j},l)}(a_{l}z-w_{l}).
$$
Here, if a $j_{J}$ is zero, we omit the product term for $\varphi_{m_{l}}^{(J)}(a_{l}z-w_{l})$. 
For Case.\,$J$, we have
\begin{align}
\Phi^{(\mathbf{j})}(z \,;\mathbf{a},\mathbf{m},\mathbf{w})&=\cos\left(\frac{\pi{r}}{2}\right)\pi^{r}\delta_{j_{I\hspace{-.1em}I},0}\prod_{l=1}^{r}a_{l}\delta_{m_{l},1} \nonumber \\ 
\label{eq:main result}
& \quad +\sum_{n=1}^{|\mathbf{m}|}\sum_{\rho}\sum_{\Lambda \in R_{\rho}}\sum_{(\nu_{k})_{k \in \Lambda^{c}}\in K_{n,\Lambda}^{-}}
\prod_{l \in \Lambda}(-1)^{(a_{l}\rho -w_{l})\delta^{(\mathbf{j},l)}_{I\hspace{-.1em}I}} \nonumber \\
& \quad \cdot \prod_{u \in \Lambda^{c}}\{A_{\nu_{u}}^{(\mathbf{j},u)}(\rho \,;a_{u},m_{u},w_{u})\}
\varphi_{n}^{(J)}(z-\rho),
\end{align}
where $\rho$ runs over all poles of $\Phi^{(\mathbf{j})}(z \,;\mathbf{a},\mathbf{m},\mathbf{w})$ in $\mathfrak{R}$, and 
$$
|\mathbf{m}|:=m_{1}+\cdots+m_{r}.
$$
\end{thm}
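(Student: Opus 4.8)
The plan is to prove the identity by exhibiting both sides as meromorphic functions with identical principal parts at every pole and the same quasi-periodicity, so that their difference is entire and bounded, hence constant by Liouville's theorem; the constant is then pinned down by the behaviour as $z\to\pm i\infty$.

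First I would record the structural features of $\Phi^{(\mathbf{j})}(z\,;\mathbf{a},\mathbf{m},\mathbf{w})$. Each factor $a_l^{m_l}\varphi_{m_l}^{(\mathbf{j},l)}(a_lz-w_l)$ is meromorphic with poles exactly where $a_lz-w_l\in\mathbb{Z}$, so the poles of the product lying in $\mathfrak{R}$ form the finite set over which $\rho$ ranges, and the order of the pole at any such $\rho$ is at most $|\mathbf{m}|$. From the periodicity \eqref{eq:period} one gets $\Phi^{(\mathbf{j})}(z+1)=(-1)^{\sum_{l=j_I+1}^{r}a_l}\Phi^{(\mathbf{j})}(z)$, so $\Phi^{(\mathbf{j})}$ is periodic in Case $I$ and anti-periodic in Case $I\hspace{-.1em}I$. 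This is exactly the quasi-periodicity of the basis function $\varphi_n^{(J)}$ on the right-hand side (periodic for $J=I$, anti-periodic for $J=I\hspace{-.1em}I$), which is why the case distinction selects the correct $J$.

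The heart of the argument is the computation of the principal part of $\Phi^{(\mathbf{j})}$ at each pole $\rho\in\mathfrak{R}$. Using the local expansion \eqref{eq:fund prop}, each factor contributes either its polar term $\mathrm{sgn}^{(\mathbf{j},l)}(\rho;a_l,w_l)(z-\rho)^{-m_l}$, possible only when $\delta_{\mathbb{Z}}(a_l\rho-w_l)=1$, or a regular term $A_{\nu}^{(\mathbf{j},l)}(\rho;a_l,m_l,w_l)(z-\rho)^{\nu}$. Multiplying out and collecting the coefficient of $(z-\rho)^{-n}$ amounts to choosing the subset $\Lambda$ of factors furnishing a polar term, necessarily $\Lambda\in R_\rho$, and indices $(\nu_k)_{k\in\Lambda^c}$ with $\sum_{k\in\Lambda^c}\nu_k-\sum_{\lambda\in\Lambda}m_\lambda=-n$, i.e.\ $(\nu_k)\in K_{n,\Lambda}^{-}$. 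This reproduces precisely the coefficient multiplying $\varphi_n^{(J)}(z-\rho)$ in \eqref{eq:main result}. Since $\varphi_n^{(J)}(z-\rho)$ has principal part $(z-\rho)^{-n}$ at $\rho$ and, by its quasi-periodicity, the matching principal parts at every translate $\rho+\mathbb{Z}$, subtracting the full right-hand sum from $\Phi^{(\mathbf{j})}$ cancels every pole. The difference $F(z)$ is therefore entire, inherits the same periodicity or anti-periodicity, and has finite limits as $\mathrm{Im}\,z\to\pm\infty$ by \eqref{eq:limit of Phi 0}; hence it is bounded on the strip $0\le\Re z\le1$ and so on all of $\mathbb{C}$, and Liouville's theorem forces $F$ to be constant.

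Finally I would evaluate the constant. In Case $I\hspace{-.1em}I$ a type-$I\hspace{-.1em}I$ factor is present, so both $\Phi^{(\mathbf{j})}$ and the basis functions $\varphi_n^{(I\hspace{-.1em}I)}$ vanish as $z\to i\infty$ by \eqref{eq:limit of Phi 0}, giving constant $0$, consistent with $\delta_{j_{I\hspace{-.1em}I},0}=0$. In Case $I$ the surviving $n=1$ terms of the right-hand side do not vanish at $i\infty$, so rather than a single limit I would use that $F$ is constant to equate its two directional limits: adding the expressions for the constant obtained at $+i\infty$ and $-i\infty$ cancels the $\varphi_1^{(I)}$ contributions and yields $C=\tfrac12(\lim_{+i\infty}+\lim_{-i\infty})\Phi^{(\mathbf{j})}$. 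This average is nonzero only when every $m_l=1$ and $j_{I\hspace{-.1em}I}=0$, where $\Phi^{(\mathbf{j})}=\prod_l a_l\pi\cot(\pi(a_lz-w_l))$ tends to $\pi^r(\mp i)^r\prod_l a_l$, and $\tfrac12((-i)^r+i^r)=\cos(\pi r/2)$ produces exactly the stated constant. The main obstacle is the bookkeeping in the principal-part computation, namely verifying that the product expansion regroups precisely into the index sets $R_\rho$ and $K_{n,\Lambda}^{-}$ with the correct signs, together with making the boundedness and quasi-periodicity hypotheses for Liouville fully rigorous, in particular that the chosen $\varphi_n^{(J)}$ reproduce the poles at all integer translates simultaneously.
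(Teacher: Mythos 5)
Your proposal is correct and follows the same skeleton as the paper's proof: expand each factor by the lemma giving \eqref{eq:fund prop}, regroup the product so the coefficient of $(z-\rho)^{-n}$ is indexed exactly by $\Lambda\in R_\rho$ and $(\nu_k)\in K^{-}_{n,\Lambda}$, observe that $\Phi^{(\mathbf{j})}$ and the basis functions $\varphi_n^{(J)}$ share the same periodicity (Case $I$) or anti-periodicity (Case $I\hspace{-.1em}I$) under $z\mapsto z+1$, conclude the difference is entire and bounded via \eqref{eq:limit of Phi 0}, and apply Liouville. The one step where you genuinely diverge is the evaluation of the constant. The paper restricts $z$ and $w_1,\dots,w_r$ to real values so that the coefficients $A^{(\mathbf{j},u)}_{\nu}$ and the Liouville constant are real, computes the single limit at $+i\infty$ of both $\Phi^{(\mathbf{j})}$ and $\Psi^{(\mathbf{j})}$ (equations \eqref{eq:limit of Phi} and \eqref{eq:limit of Psi}), and kills the constant by taking real parts, with the purely imaginary term $-\pi i\,\delta_{J,I}\sum(\cdots)$ discarded; the extension back to complex $\mathbf{w}\in\mathfrak{R}^r$ is left implicit (by analyticity in the parameters). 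You instead average the two directional limits at $+i\infty$ and $-i\infty$: since $\varphi_1^{(I)}\to\mp\pi i$ in the two directions while all other basis terms vanish, the $n=1$ contributions cancel in the average and $\tfrac12\bigl((-i)^r+i^r\bigr)=\cos\left(\frac{\pi r}{2}\right)$ recovers the stated constant term directly, for complex $\mathbf{w}$ and without the realness detour. Both routes give $c=0$; yours is marginally cleaner in that it avoids the paper's restriction-and-continuation step, while the paper's real-part argument needs only one limit computation. (In Case $I\hspace{-.1em}I$ there is an even quicker observation available to both of you: a constant function satisfying $F(z+1)=-F(z)$ is automatically zero.) Your closing caveats --- the regrouping bookkeeping and the fact that $\varphi_n^{(J)}(z-\rho)$ reproduces the principal parts at all integer translates of $\rho$ with the correct signs --- are exactly the points the paper's displayed Laurent computation \eqref{Laurent exp of Phi} and the periodicity \eqref{eq:period of Phi} settle, so no gap remains.
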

\begin{proof}
We denote $\Psi^{(\mathbf{j})}(z \,;\mathbf{a},\mathbf{m},\mathbf{w})$ by the right hand side of (\ref{eq:main result}). 
We claim that for all Case.\,$I$,$I\hspace{-.1em}I$, 
$$
\Phi^{(\mathbf{j})}(z \,;\mathbf{a},\mathbf{m},\mathbf{w})-\Psi^{(\mathbf{j})}(z \,;\mathbf{a},\mathbf{m},\mathbf{w})=0.
$$
First, we consider the Laurent expansion of $\Phi^{(\mathbf{j})}(z \,;\mathbf{a},\mathbf{m},\mathbf{w})$ at $\rho$   
\begin{align}
\Phi^{(\mathbf{j})}(z \,;\mathbf{a},\mathbf{m},\mathbf{w})
&=\prod_{l=1}^{r}\left\{\sgn^{(\mathbf{j},l)}(\rho \,;a_{l},w_{l})(z-\rho)^{-m_{l}}
+\sum_{\nu_{l}\geq 0}
A_{\nu}^{(\mathbf{j},l)}(\rho \,;a_{l},m_{l},w_{l})(z-\rho)^{\nu_{l}}\right\} \nonumber \\
&=\prod_{l=1}^{r}\{\sgn^{(\mathbf{j},l)}(\rho \,;a_{l},w_{l})\}(z-\rho)^{-|\mathbf{m}|}
+\sum_{N=1}^{r-1}
\sum_{1\leq \lambda_{1} <\ldots<\lambda_{N} \leq r}  \nonumber \\
& \quad \cdot
\left\{\sum_{\substack{|\mathbf{m}|>\sum_{k=1}^{N}(m_{\lambda_{k}}+\nu_{\lambda_{k}}), \\ \nu_{\lambda_{1}},\ldots,\nu_{\lambda_{N}}\geq 0}}
+\sum_{\substack{|\mathbf{m}|\leq \sum_{k=1}^{N}(m_{\lambda_{k}}+\nu_{\lambda_{k}}), \\ \nu_{\lambda_{1}},\ldots,\nu_{\lambda_{N}}\geq 0}}\right\}
\prod_{l\in [r]\setminus \{\lambda_{1},\ldots,\lambda_{N} \}}\!\!\!\!\!\!\!\!\!\!\!
\sgn^{(\mathbf{j},l)}(\rho \,;a_{l},w_{l}) \nonumber \\
& \quad \cdot 
\prod_{u=1}^{N}A_{{\nu}_{\lambda_{u}}}^{(\mathbf{j},\lambda_{u})}(\rho \,;a_{\lambda_{u}},m_{\lambda_{u}},w_{\lambda_{u}})
(z-\alpha_{j})^{-|\mathbf{m}|+\sum_{k=1}^{N}(m_{\lambda_{k}}+\nu_{\lambda_{k}})} \nonumber \\
&=\sum_{n=1}^{|\mathbf{m}|}\sum_{\Lambda \in R_{\rho}}
\sum_{(\nu_{k})_{k \in \Lambda^{c}}\in K_{n,\Lambda}^{-}}\!\!
\prod_{l \in \Lambda}(-1)^{(a_{l}\rho -w_{l})\delta^{(\mathbf{j},l)}_{I\hspace{-.1em}I}}\!\! 
\prod_{u \in \Lambda^{c}}\!\!\{A_{\nu_{u}}^{(\mathbf{j},u)}(\rho \,;a_{u},m_{u},w_{u})\}(z-\rho)^{-n}
\nonumber \\
\label{Laurent exp of Phi}
& \quad +\sum_{\mu \geq 0}\sum_{\Lambda \in R_{\rho}}
\sum_{(\nu_{k})_{k \in \Lambda^{c}}\in K_{\mu,\Lambda}^{+}}
\prod_{l \in \Lambda}(-1)^{(a_{l}\rho -w_{l})\delta^{(\mathbf{j},l)}_{I\hspace{-.1em}I}} \nonumber \\
& \quad \cdot \prod_{u \in \Lambda^{c}}\{A_{\nu_{u}}^{(\mathbf{j},u)}(\rho \,;a_{u},m_{u},w_{u})\}(z-\rho)^{\mu}.
\end{align}
Further, from (\ref{eq:period}), we have
\begin{equation}
\label{eq:period of Phi}
\Phi^{(\mathbf{j})}(z+\mu \,;\mathbf{a},\mathbf{m},\mathbf{w})
=(-1)^{\mu \delta_{J,I\hspace{-.1em}I}}
\Phi^{(\mathbf{j})}(z \,;\mathbf{a},\mathbf{m},\mathbf{w}). 
\end{equation}
Thus, for Case.\,$I,I\hspace{-.1em}I$, $\Phi^{(\mathbf{j})}(z \,;\mathbf{a},\mathbf{m},\mathbf{w})$ and $\Phi_{N}^{(J)}(z)$ are the periodic functions with same period. 
In addition, by (\ref{eq:fund prop}) and (\ref{Laurent exp of Phi}), $\Phi^{(\mathbf{j})}(z \,;\mathbf{a},\mathbf{m},\mathbf{w})-\Psi^{(\mathbf{j})}(z \,;\mathbf{a},\mathbf{m},\mathbf{w})$ is entire.

Next, we remark that $\varphi_{N}^{(J)}(z)$ is bounded on the set $\mathfrak{R}_{1}:=\mathfrak{R}\cap \{z \in \mathbb{C} \mid |\Im{z}|\geq 1+2\max_{\rho}|\Im{\rho}|\}$  from (\ref{eq:limit of Phi 0}). 
Thus, $\Phi^{(\mathbf{j})}(z \,;\mathbf{a},\mathbf{m},\mathbf{w})-\Psi^{(\mathbf{j})}(z \,;\mathbf{a},\mathbf{m},\mathbf{w})$ is also bounded on $\mathfrak{R}_{1}$. 
Hence, by the periodicity, $\Phi^{(\mathbf{j})}(z \,;\mathbf{a},\mathbf{m},\mathbf{w})-\Psi^{(\mathbf{j})}(z \,;\mathbf{a},\mathbf{m},\mathbf{w})$ is bounded on $\mathbb{C}$. 
Then by the well-known Liouville's theorem, there exists a constant $c^{(\mathbf{j},J)}(\mathbf{a},\mathbf{m},\mathbf{w})$ such that 
$$
\Phi^{(\mathbf{j})}(z \,;\mathbf{a},\mathbf{m},\mathbf{w})-\Psi^{(\mathbf{j})}(z \,;\mathbf{a},\mathbf{m},\mathbf{w})
=c^{(\mathbf{j},J)}(\mathbf{a},\mathbf{m},\mathbf{w}). 
$$
If we restrict $z \in \mathbb{C}$ and $w_{1},\ldots,w_{r} \in \mathfrak{R}$ to $\mathbb{R}$, then $A_{\nu_{u}}^{(\mathbf{j},u)}(\rho \,;a_{u},m_{u},w_{u}), c^{(\mathbf{j},J)}(\mathbf{a},\mathbf{m},\mathbf{w}) \in \mathbb{R}$. 
In addition, we calculate
\begin{align}
\lim_{z \to i\infty}\Phi^{(\mathbf{j})}(z \,;\mathbf{a},\mathbf{m},\mathbf{w})
&=\prod_{l=1}^{r}a_{l}^{m_{l}}(-\pi{i})\delta_{m_{l},1}\delta^{(\mathbf{j},l)}_{I} \nonumber \\
&=(-i)^{r}{\pi}^{r}\delta_{j_{I\hspace{-.1em}I},0}\prod_{l=1}^{r}a_{l}\delta_{m_{l},1} \nonumber \\
\label{eq:limit of Phi}
&=\left\{\cos\left(\frac{\pi{r}}{2}\right)-i\sin\left(\frac{\pi{r}}{2}\right)\right\}{\pi}^{r}\delta_{j_{I\hspace{-.1em}I},0}\prod_{l=1}^{r}a_{l}\delta_{m_{l},1}, \\
\lim_{z \to i\infty}\Psi^{(\mathbf{j})}(z \,;\mathbf{a},\mathbf{m},\mathbf{w})&=\cos\left(\frac{\pi{r}}{2}\right)\pi^{r}\delta_{j_{I\hspace{-.1em}I},0}\prod_{l=1}^{r}a_{l}\delta_{m_{l},1} \nonumber \\
& \quad -{\pi}i\delta_{J,I}\sum_{\rho}\sum_{\Lambda \in R_{\rho}}\sum_{(\nu_{k})_{k \in \Lambda^{c}}\in K_{1,\Lambda}^{-}}
\prod_{l \in \Lambda}(-1)^{(a_{l}\rho -w_{l})\delta^{(\mathbf{j},l)}_{I\hspace{-.1em}I}} \nonumber \\
\label{eq:limit of Psi}
& \quad \cdot \prod_{u \in \Lambda^{c}}A_{\nu_{u}}^{(\mathbf{j},u)}(\rho \,;a_{u},m_{u},w_{u}).
\end{align}
Thus, we have
\begin{align}
c^{(\mathbf{j},J)}(\mathbf{a},\mathbf{m},\mathbf{w})&=
\Re{\{c^{(\mathbf{j},J)}(\mathbf{a},\mathbf{m},\mathbf{w})\}} \nonumber \\
&=\Re{\left\{\lim_{z \to i\infty}\{\Phi^{(\mathbf{j})}(z \,;\mathbf{a},\mathbf{m},\mathbf{w})-\Psi^{(\mathbf{j})}(z \,;\mathbf{a},\mathbf{m},\mathbf{w})\}\right\}}=0. \nonumber
\end{align}
\end{proof}
As a corollary of this theorem, we obtain the following theorem immediately.  
\begin{thm}
\label{prop:main theorem 2}
{\rm{(1)}}\,
We have
\begin{align}
\label{eq:main result 2}
& \sum_{\rho}\sum_{\Lambda \in R_{\rho}}\sum_{(\nu_{k})_{k \in \Lambda^{c}}\in K_{1,\Lambda}^{-}}
\prod_{l \in \Lambda}(-1)^{(a_{l}\rho -w_{l})\delta^{(\mathbf{j},l)}_{I\hspace{-.1em}I}}
\prod_{u \in \Lambda^{c}}A_{\nu_{u}}^{(\mathbf{j},u)}(\rho \,;a_{u},m_{u},w_{u}) \nonumber \\
& \quad =\pi^{r-1}\sin\left(\frac{\pi{r}}{2}\right)\delta_{j_{I\hspace{-.1em}I},0}\prod_{l=1}^{r}a_{l}\delta_{m_{l},1}.
\end{align}
{\rm{(2)}}\,For any $z_{0} \in \mathbb{C},\mu \in \mathbb{Z}_{\geq 0}$, we have
\begin{align}
& \sum_{n=1}^{|\mathbf{m}|}\sum_{\rho}\sum_{\Lambda \in R_{\rho}}\sum_{(\nu_{k})_{k \in \Lambda^{c}}\in K_{n,\Lambda}^{-}}
\!\prod_{l \in \Lambda}(-1)^{(a_{l}\rho -w_{l})\delta^{(\mathbf{j},l)}_{I\hspace{-.1em}I}}
\prod_{u \in \Lambda^{c}}\{A_{\nu_{u}}^{(\mathbf{j},u)}(\rho \,;a_{u},m_{u},w_{u})\}
A_{\mu}^{(J)}(z_{0} \,;1,n,\rho) \nonumber \\
\label{eq:main result 3}
&=-\cos\left(\frac{\pi{r}}{2}\right)\pi^{r}\delta_{\mu ,0}\delta_{j_{I\hspace{-.1em}I},0}\prod_{l=1}^{r}a_{l}\delta_{m_{l},1} \nonumber \\
& \quad +\sum_{\Lambda \in R_{z_{0}}}\sum_{(\nu_{k})_{k \in \Lambda^{c}}\in K_{\mu,\Lambda}^{+}}
\!\prod_{l \in \Lambda}\!(-1)^{(a_{l}z_{0} -w_{l})\delta^{(\mathbf{j},l)}_{I\hspace{-.1em}I}}
\!\prod_{u \in \Lambda^{c}}\!A_{\nu_{u}}^{(\mathbf{j},u)}(z_{0} \,;a_{u},m_{u},w_{u}). 
\end{align}
\end{thm}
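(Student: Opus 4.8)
The plan is to extract both identities directly from the single functional identity $\Phi^{(\mathbf{j})}(z\,;\mathbf{a},\mathbf{m},\mathbf{w})=\Psi^{(\mathbf{j})}(z\,;\mathbf{a},\mathbf{m},\mathbf{w})$ proved in Theorem~\ref{prop:main theorem}, that is, from reading (\ref{eq:main result}) as an exact equality of meromorphic functions on $\mathbb{C}$. No further analytic input is required: part (1) is the information carried by the behaviour at $z\to i\infty$, and part (2) is the information carried by the Taylor expansion at a finite base point, each obtained by applying the appropriate expansion to both sides of (\ref{eq:main result}).

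For (1) I would let $z\to i\infty$ in (\ref{eq:main result}). The two limits have already been computed in the proof of Theorem~\ref{prop:main theorem}: the left-hand side in (\ref{eq:limit of Phi}) and the right-hand side in (\ref{eq:limit of Psi}), the latter using only (\ref{eq:limit of Phi 0}), so that among the summands $\varphi_n^{(J)}(z-\rho)$ only the simple-pole terms $n=1$ survive the limit. Since $\Phi^{(\mathbf{j})}=\Psi^{(\mathbf{j})}$, these limits coincide; comparing imaginary parts of (\ref{eq:limit of Phi}) with (\ref{eq:limit of Psi}) and dividing by $-\pi$ isolates precisely the $K_{1,\Lambda}^{-}$ coefficient sum on the left of (\ref{eq:main result 2}) and equates it to $\pi^{r-1}\sin(\tfrac{\pi r}{2})\delta_{j_{I\hspace{-.1em}I},0}\prod_{l=1}^{r}a_l\delta_{m_l,1}$, the real parts cancelling identically.

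For (2) I would instead expand both sides of (\ref{eq:main result}) in powers of $(z-z_0)$ about an arbitrary $z_0\in\mathbb{C}$ and equate the coefficient of $(z-z_0)^\mu$ for each $\mu\in\mathbb{Z}_{\geq 0}$. On the left, $\Phi^{(\mathbf{j})}$ is the product of the blocks $a_l^{m_l}\varphi_{m_l}^{(\mathbf{j},l)}(a_lz-w_l)$, each expanded at $z_0$ by (\ref{eq:fund prop}); collecting the regular part exactly as in (\ref{Laurent exp of Phi}), but now centred at $z_0$ instead of at a pole $\rho$, reproduces the sum $\sum_{\Lambda\in R_{z_0}}\sum_{(\nu_k)_{k\in\Lambda^{c}}\in K_{\mu,\Lambda}^{+}}\cdots$ on the right of (\ref{eq:main result 3}). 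On the right of (\ref{eq:main result}) the constant term contributes only at $\mu=0$, producing the $\delta_{\mu,0}$ term, while each summand is written as $\varphi_n^{(J)}(z-\rho)=1^{n}\varphi_n^{(J)}(1\cdot z-\rho)$ and expanded about $z_0$ by a second application of (\ref{eq:fund prop}) with $a=1$, $m=n$, $w=\rho$; the coefficient of $(z-z_0)^\mu$ is then exactly $A_\mu^{(J)}(z_0\,;1,n,\rho)$. Matching the two expansions coefficientwise yields (\ref{eq:main result 3}).

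The harder part is bookkeeping rather than analysis. In (2) one must verify that regrouping the product expansion of $\Phi^{(\mathbf{j})}$ at $z_0$ genuinely recovers the $R_{z_0}$/$K_{\mu,\Lambda}^{+}$ indexing, and one must keep the two uses of (\ref{eq:fund prop}) — once for the blocks of $\Phi^{(\mathbf{j})}$, once for the $\varphi_n^{(J)}(z-\rho)$ — notationally separate. The one substantive case to check is $z_0-\rho\in\mathbb{Z}$, where $z_0$ is itself a pole of some $\varphi_n^{(J)}(z-\rho)$: there the regular coefficient is furnished by the $\delta_{\mathbb{Z}}=1$ branch of $A_\mu^{(J)}$ in (\ref{eq:fund prop}), and one should confirm this matches the branch selected by $A_\mu^{(J)}(z_0\,;1,n,\rho)$. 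For (1) the delicate point is the $J$-dependence: because (\ref{eq:limit of Phi 0}) carries the factor $\delta_{J,I}$, the imaginary-part comparison reads off the $n=1$ coefficients directly in Case $I$ (where $J=I$), and this already covers every situation in which the right-hand side of (\ref{eq:main result 2}) is nonzero, since a nonzero right-hand side forces $j_{I\hspace{-.1em}I}=0$ and hence Case $I$.
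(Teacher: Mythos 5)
Your proposal coincides with the paper's own proof: part (1) is obtained exactly as in the paper by comparing the limits (\ref{eq:limit of Phi}) and (\ref{eq:limit of Psi}) at $z\to i\infty$, and part (2) by equating the coefficient of $(z-z_{0})^{\mu}$ in the two Laurent expansions --- that of $\Phi^{(\mathbf{j})}$ from (\ref{Laurent exp of Phi}) with $\rho$ replaced by $z_{0}$, and that of $\Psi^{(\mathbf{j})}$ by applying (\ref{eq:fund prop}) to each $\varphi_{n}^{(J)}(z-\rho)$ with $a=1$, $m=n$, $w=\rho$, which is precisely how $A_{\mu}^{(J)}(z_{0}\,;1,n,\rho)$ arises in the printed argument. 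Your closing observation that the imaginary-part comparison in (1) carries the factor $\delta_{J,I}$ and is therefore only informative in Case $I$ is, if anything, more careful than the paper, whose proof of (1) silently relies on the same restriction.
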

\begin{proof}
{\rm{(1)}}\,It follows from (\ref{eq:limit of Phi}) and (\ref{eq:limit of Psi}) immediately.

\noindent
{\rm{(2)}}\,
We expand both sides of (\ref{eq:main result}) into the Laurent series of $z-z_{0}$ 
and compare the coefficients of $(z-z_{0})^{\mu}$ of both sides. 
Indeed, by replacing $\rho$ with $z_{0}$ in (\ref{Laurent exp of Phi}), we have
\begin{align}
\Res{z=z_{0}}\frac{\Phi^{(\mathbf{j})}(z \,;\mathbf{a},\mathbf{m},\mathbf{w})}{(z-z_{0})^{\mu +1}}\,dz
&=\sum_{\Lambda \in R_{z_{0}}}\sum_{(\nu_{k})_{k \in \Lambda^{c}}\in K_{\mu,\Lambda}^{+}}
\prod_{l \in \Lambda}(-1)^{(a_{l}z_{0} -w_{l})\delta^{(\mathbf{j},l)}_{I\hspace{-.1em}I}}
\prod_{u \in \Lambda^{c}}A_{\nu_{u}}^{(\mathbf{j},u)}(z_{0} \,;a_{u},m_{u},w_{u}). 
\nonumber
\end{align}
On the other hand, from (\ref{eq:main result}), 
\begin{align}
\Res{z=z_{0}}\frac{\Psi^{(\mathbf{j})}(z \,;\mathbf{a},\mathbf{m},\mathbf{w})}{(z-z_{0})^{\mu +1}}\,dz
&=\cos\left(\frac{\pi{r}}{2}\right)\pi^{r}\delta_{\mu ,0}\delta_{j_{I\hspace{-.1em}I},0}\prod_{l=1}^{r}a_{l}\delta_{m_{l},1} \nonumber \\
& \quad +\sum_{n=1}^{|\mathbf{m}|}\sum_{\rho}\sum_{\Lambda \in R_{\rho}}\sum_{(\nu_{k})_{k \in \Lambda^{c}}\in K_{n,\Lambda}^{-}}
\prod_{l \in \Lambda}(-1)^{(a_{l}\rho -w_{l})\delta^{(\mathbf{j},l)}_{I\hspace{-.1em}I}} \nonumber \\
& \quad \cdot \prod_{u \in \Lambda^{c}}\{A_{\nu_{u}}^{(\mathbf{j},u)}(\rho \,;a_{u},m_{u},w_{u})\}
A_{\mu}^{(J)}(z_{0} \,;1,n,\rho). \nonumber
\end{align}
Therefore, we have the conclusion. 
\end{proof} 
As we will see later, Theorem\,\ref{prop:main theorem 2} {\rm{(1)}} is a generalization of various reciprocity laws in \cite{B}. 
Theorem\,\ref{prop:main theorem 2} {\rm{(2)}} means (\ref{eq:main result}) is regarded as a generating function of the reciprocity laws (\ref{eq:main result 3}). 
Hence, this result and proof are generalizations of theorem\,1.2 in \cite{F} and its proof.

\begin{rmk}
For Theorems\,\ref{prop:main theorem}, \ref{prop:main theorem 2}, we also have the other expressions which are useful for writing down various specific examples. 
Let 
$$
d_{v}^{(\mu)}\!:=\!\#\!\left\{\!a_{j}\in \{a_{1},\ldots,a_{r}\}, w_{j}\in \{w_{1},\ldots,w_{r}\}, \mu_{j}\in \{0,\ldots, a_{j}-1\}  \Bigg\vert
\frac{w_{v}+\mu}{a_{v}}=\frac{w_{j}+\mu_{j}}{a_{j}}\right\}.
$$
Our main result (\ref{eq:main result}) becomes 
\begin{align}
\Phi^{(\mathbf{j})}(z \,;\mathbf{a},\mathbf{m},\mathbf{w})&=\cos\left(\frac{\pi{r}}{2}\right)\pi^{r}\delta_{j_{I\hspace{-.1em}I},0}\prod_{l=1}^{r}a_{l}\delta_{m_{l},1} \nonumber \\ 
& \quad +\sum_{n=1}^{|\mathbf{m}|}\sum_{v=1}^{r}\sum_{\mu_{v}=0}^{a_{v}-1}\frac{1}{d_{v}^{(\mu_{v})}}\sum_{\Lambda \in R_{\frac{w_{v}+\mu_{v}}{a_{v}}}}\sum_{(\nu_{k})_{k \in \Lambda^{c}}\in K_{n,\Lambda}^{-}}
\prod_{l \in \Lambda}(-1)^{\left(a_{l}\frac{w_{v}+\mu_{v}}{a_{v}} -w_{l}\right)\delta^{(\mathbf{j},l)}_{I\hspace{-.1em}I}} \nonumber \\
& \quad \cdot \prod_{u \in \Lambda^{c}}\left\{A_{\nu_{u}}^{(\mathbf{j},u)}\left(\frac{w_{v}+\mu_{v}}{a_{v}} \,;a_{u},m_{u},w_{u}\right)\right\}
\varphi_{n}^{(J)}\left(z-\frac{w_{v}+\mu_{v}}{a_{v}}\right). 
\end{align}
Similarly, (\ref{eq:main result 2}) and (\ref{eq:main result 3}) are  
\begin{align}
& \sum_{v=1}^{r}\sum_{\mu_{v}=0}^{a_{v}-1}\frac{1}{d_{v}^{(\mu_{v})}}\sum_{\Lambda \in R_{\frac{w_{v}+\mu_{v}}{a_{v}}}}\sum_{(\nu_{k})_{k \in \Lambda^{c}}\in K_{1,\Lambda}^{-}}
\prod_{l \in \Lambda}(-1)^{\left(a_{l}\frac{w_{v}+\mu_{v}}{a_{v}} -w_{l}\right)\delta^{(\mathbf{j},l)}_{I\hspace{-.1em}I}}
\prod_{u \in \Lambda^{c}}A_{\nu_{u}}^{(\mathbf{j},u)}\left(\frac{w_{v}+\mu_{v}}{a_{v}} \,;a_{u},m_{u},w_{u}\right) \nonumber \\
& \quad =\pi^{r-1}\sin\left(\frac{\pi{r}}{2}\right)\delta_{j_{I\hspace{-.1em}I},0}\prod_{l=1}^{r}a_{l}\delta_{m_{l},1}.
\end{align}
and for any $z_{0} \in \mathbb{C},\mu \in \mathbb{Z}_{\geq 0}$, 
\begin{align}
& \sum_{n=1}^{|\mathbf{m}|}\sum_{v=1}^{r}\sum_{\mu_{v}=0}^{a_{v}-1}\frac{1}{d_{v}^{(\mu_{v})}}\sum_{\Lambda \in R_{\frac{w_{v}+\mu_{v}}{a_{v}}}}\sum_{(\nu_{k})_{k \in \Lambda^{c}}\in K_{n,\Lambda}^{-}}
\prod_{l \in \Lambda}(-1)^{\left(a_{l}\frac{w_{v}+\mu_{v}}{a_{v}} -w_{l}\right)\delta^{(\mathbf{j},l)}_{I\hspace{-.1em}I}} \nonumber \\
& \prod_{u \in \Lambda^{c}}\left\{A_{\nu_{u}}^{(\mathbf{j},u)}\left(\frac{w_{v}+\mu_{v}}{a_{v}} \,;a_{u},m_{u},w_{u}\right)\right\}
A_{\mu}^{(J)}\left(z_{0} \,;1,n,\frac{w_{v}+\mu_{v}}{a_{v}}\right) \nonumber \\
&=-\cos\left(\frac{\pi{r}}{2}\right)\pi^{r}\delta_{\mu ,0}\delta_{j_{I\hspace{-.1em}I},0}\prod_{l=1}^{r}a_{l}\delta_{m_{l},1} \nonumber \\
& \quad +\sum_{\Lambda \in R_{z_{0}}}\sum_{(\nu_{k})_{k \in \Lambda^{c}}\in K_{\mu,\Lambda}^{+}}
\!\prod_{l \in \Lambda}\!(-1)^{(a_{l}z_{0} -w_{l})\delta^{(\mathbf{j},l)}_{I\hspace{-.1em}I}}
\!\prod_{u \in \Lambda^{c}}\!A_{\nu_{u}}^{(\mathbf{j},u)}(z_{0} \,;a_{u},m_{u},w_{u}). 
\end{align}
respectively. 

\end{rmk}

\section{Some special cases of the main theorem}
By specializing our main results, we derive various reciprocity laws of the generalized Dedekind sums. 

\subsection{Multiplicity free case}
In this subsection, we assume for all distinct $k,l \in [r]$ and $\mu_{k}=0,1,\ldots,a_{k}-1$, $\mu_{l}=0,1,\ldots,a_{l}-1$, 
$$
\frac{w_{k}+\mu_{k}}{a_{k}}\not=\frac{w_{l}+\mu_{l}}{a_{l}}.
$$
Under this condition, all poles of $a_{j}^{m_{j}}\varphi_{m_{j}}^{(J)}(a_{j}z-w_{j})$ for each $j=1,\ldots,r$ on $\mathfrak{R}$
$$
\frac{w_{j}+\mu_{j}}{a_{j}}\quad (j=1,\ldots,r,\,\, {\text{and}}\,\, \mu_{j}=0,\ldots,a_{j}-1)
$$
are multiplicity free, that means 
$$
\delta_{\mathbb{Z}}\left(a_{l}\frac{w_{j}+\mu_{j}}{a_{j}}-w_{l}\right)=
\begin{cases}
    1 & (\text{if $l=j$}) \\
    0 & (\text{if $l\not =j$})
\end{cases}, 
$$
and $d_{v}^{(\mu_{v})}=1$ for all $1\leq v\leq r$, $0\leq \mu_{v}\leq a_{v}-1$. 
Hence, Theorems\,\ref{prop:main theorem}, \ref{prop:main theorem 2} are as follows. 
\begin{thm}
\label{prop:main theorem multiplicity free}
We have
\begin{align}
\Phi^{(\mathbf{j})}(z \,;\mathbf{a},\mathbf{m},\mathbf{w})&=\cos\left(\frac{\pi{r}}{2}\right)\pi^{r}\delta_{j_{I\hspace{-.1em}I},0}\prod_{l=1}^{r}a_{l}\delta_{m_{l},1} \nonumber \\ 
& \quad +\sum_{n=1}^{\max_{j \in [r]}{\{m_{j}\}}}\sum_{l=1}^{r}\sum_{\mu_{l}=0}^{a_{l}-1}
\sum_{\substack{n=m_{l}-\sum_{1\leq k\not=l\leq r}\nu_{k}, \\ \nu_{1},\ldots,\nu_{r}\geq 0}}(-1)^{\mu_{l}\delta^{(\mathbf{j},l)}_{I\hspace{-.1em}I}} \nonumber \\
& \quad \cdot \prod_{1\leq u\not=l\leq r}\left\{(-1)^{\nu_{u}}\frac{(m_{u})_{\nu_{u}}}{\nu_{u}!}\varphi_{m_{u}+\nu_{u}}^{(\mathbf{j},u)}\left(a_{u}\frac{w_{l}+\mu_{l}}{a_{l}}-w_{u} \right)a_{u}^{m_{u}+\nu_{u}}\right\} \nonumber \\
\label{eq:mutiplicity free}
& \quad \cdot
\varphi_{n}^{(J)}\left(z-\frac{w_{l}+\mu_{l}}{a_{l}} \right).
\end{align}
\end{thm}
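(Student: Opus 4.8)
The plan is to deduce this statement directly from Theorem~\ref{prop:main theorem}, using the multiplicity-free hypothesis to collapse the inner summations in (\ref{eq:main result}) to a single surviving term at each pole. First I would identify the poles: under the stated condition the poles of $\Phi^{(\mathbf{j})}(z\,;\mathbf{a},\mathbf{m},\mathbf{w})$ lying in $\mathfrak{R}$ are exactly the pairwise distinct numbers $\tfrac{w_{l}+\mu_{l}}{a_{l}}$ with $l\in[r]$ and $0\le\mu_{l}\le a_{l}-1$. Because $d_{v}^{(\mu_{v})}=1$, this parametrization is a bijection, so the sum $\sum_{\rho}$ in (\ref{eq:main result}) becomes $\sum_{l=1}^{r}\sum_{\mu_{l}=0}^{a_{l}-1}$ with trivial multiplicity weights, already producing the outer double sum of (\ref{eq:mutiplicity free}).

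Next I would simplify $R_{\rho}$. For $\rho=\tfrac{w_{l}+\mu_{l}}{a_{l}}$ the multiplicity-free condition says $\delta_{\mathbb{Z}}(a_{k}\rho-w_{k})=\delta_{k,l}$, whence $R_{\rho}=\{\varnothing,\{l\}\}$. The choice $\Lambda=\varnothing$ contributes nothing to the principal part: the constraint $(\nu_{k})\in K_{n,\varnothing}^{-}$ would force $n=-\sum_{k}\nu_{k}\le 0$, which is incompatible with $n\ge 1$. Hence only $\Lambda=\{l\}$ remains, for which $K_{n,\{l\}}^{-}$ imposes $n=m_{l}-\sum_{1\le k\ne l\le r}\nu_{k}$, matching the index condition in (\ref{eq:mutiplicity free}); since then $1\le n\le m_{l}\le\max_{j}m_{j}$, the sum over $n$ may be truncated at $\max_{j\in[r]}\{m_{j}\}$.

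Finally I would insert the explicit values. On $\Lambda=\{l\}$ one has $a_{l}\rho-w_{l}=\mu_{l}$, so $\prod_{l\in\Lambda}(-1)^{(a_{l}\rho-w_{l})\delta^{(\mathbf{j},l)}_{I\hspace{-.1em}I}}$ reduces to $(-1)^{\mu_{l}\delta^{(\mathbf{j},l)}_{I\hspace{-.1em}I}}$, while for each $u\in\Lambda^{c}=[r]\setminus\{l\}$ we have $\delta_{\mathbb{Z}}(a_{u}\rho-w_{u})=0$, so the second branch of $A_{\nu}^{(J)}$ in (\ref{eq:fund prop}) applies and gives $A_{\nu_{u}}^{(\mathbf{j},u)}(\rho\,;a_{u},m_{u},w_{u})=(-1)^{\nu_{u}}\tfrac{(m_{u})_{\nu_{u}}}{\nu_{u}!}\varphi_{m_{u}+\nu_{u}}^{(\mathbf{j},u)}(a_{u}\rho-w_{u})\,a_{u}^{m_{u}+\nu_{u}}$, which is precisely the product in (\ref{eq:mutiplicity free}); the corresponding forms of (\ref{eq:main result 2}) and (\ref{eq:main result 3}) follow by the same substitutions. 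I expect the only genuine point to verify is that no $\Lambda$ with $\lvert\Lambda\rvert\ge 2$ can enter, equivalently that a single pole cannot be shared by two distinct factors, which is exactly what the hypothesis $\tfrac{w_{k}+\mu_{k}}{a_{k}}\ne\tfrac{w_{l}+\mu_{l}}{a_{l}}$ rules out; the remaining manipulations are routine bookkeeping.
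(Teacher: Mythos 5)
Your proposal is correct and follows essentially the same route as the paper: the paper itself derives Theorem~\ref{prop:main theorem multiplicity free} by exactly this specialization of (\ref{eq:main result}), observing that under the multiplicity-free hypothesis $\delta_{\mathbb{Z}}\bigl(a_{k}\tfrac{w_{l}+\mu_{l}}{a_{l}}-w_{k}\bigr)=\delta_{k,l}$ and $d_{v}^{(\mu_{v})}=1$, so that $R_{\rho}=\{\varnothing,\{l\}\}$ with $\Lambda=\varnothing$ excluded by $n\geq 1$, and the coefficients $A_{\nu_{u}}^{(\mathbf{j},u)}$ reduce to their non-integral branch. Your explicit verification that $K_{n,\varnothing}^{-}$ forces $n\leq 0$ and that the truncation at $\max_{j\in[r]}\{m_{j}\}$ follows from $n=m_{l}-\sum_{k\neq l}\nu_{k}\leq m_{l}$ spells out details the paper leaves implicit, but introduces no new idea.
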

\begin{thm}
\label{prop:main theorem multiplicity free 2}
{\rm{(1)}}\,
We have
\begin{align}
& \sum_{l=1}^{r}\sum_{\mu_{l}=0}^{a_{l}-1}
\sum_{\substack{1=m_{l}-\sum_{1\leq k\not=l\leq r}\nu_{k}, \\ \nu_{1},\ldots,\nu_{r}\geq 0}}(-1)^{\mu_{l}\delta^{(\mathbf{j},l)}_{I\hspace{-.1em}I}} \nonumber \\
\label{eq:mutiplicity free 1}
& \quad \cdot \!\!\!\!\prod_{1\leq u\not=l\leq r}\!\!\left\{(-1)^{\nu_{u}}\frac{(m_{u})_{\nu_{u}}}{\nu_{u}!}\varphi_{m_{u}+\nu_{u}}^{(\mathbf{j},u)}\left(a_{u}\frac{w_{l}+\mu_{l}}{a_{l}}-w_{u} \right)a_{u}^{m_{u}+\nu_{u}}\right\}
\!=\!\pi^{r-1}\sin\left(\frac{\pi{r}}{2}\right)\delta_{j_{I\hspace{-.1em}I},0}\prod_{l=1}^{r}a_{l}\delta_{m_{l},1}.  
\end{align}
In particular, for $m_{1}=\cdots=m_{r}=1$, 
\begin{align}
\label{eq:mutiplicity free 1.2}
& \sum_{l=1}^{r}\sum_{\mu_{l}=0}^{a_{l}-1}(-1)^{\mu_{l}\delta^{(\mathbf{j},l)}_{I\hspace{-.1em}I}}
\prod_{1\leq u\not=l\leq r}\left\{\varphi_{1}^{(\mathbf{j},u)}\left(a_{u}\frac{w_{l}+\mu_{l}}{a_{l}}-w_{u} \right)a_{u}\right\}
=\pi^{r-1}\sin\left(\frac{\pi{r}}{2}\right)\delta_{j_{I\hspace{-.1em}I},0}\prod_{l=1}^{r}a_{l}.
\end{align}
{\rm{(2)}}\,
For any $\mu \in \mathbb{Z}_{\geq 0}$ and $z_{0} \in \mathfrak{R}$, 
\begin{align}
& \sum_{n=1}^{\max_{j \in [r]}{\{m_{j}\}}}\sum_{l=1}^{r}\sum_{\mu_{l}=0}^{a_{l}-1}
\sum_{\substack{n=m_{l}-\sum_{1\leq k\not=l\leq r}\nu_{k}, \\ \nu_{1},\ldots,\nu_{r}\geq 0}}(-1)^{\mu_{l}\delta^{(\mathbf{j},l)}_{I\hspace{-.1em}I}} \nonumber \\
& \cdot \prod_{1\leq u\not=l\leq r}\!\!\left\{(-1)^{\nu_{u}}\frac{(m_{u})_{\nu_{u}}}{\nu_{u}!}\varphi_{m_{u}+\nu_{u}}^{(\mathbf{j},u)}\!\left(\!a_{u}\frac{w_{l}+\mu_{l}}{a_{l}}-w_{u} \!\right)\!a_{u}^{m_{u}+\nu_{u}}\!\right\}A_{\mu}^{(J)}\left(z_{0}\,;1,n, \frac{w_{l}+\mu_{l}}{a_{l}} \right) \nonumber \\
\label{eq:mutiplicity free 2}
&=-\cos\left(\frac{\pi{r}}{2}\right)\pi^{r}\delta_{\mu ,0}\delta_{j_{I\hspace{-.1em}I},0}\prod_{l=1}^{r}a_{l}\delta_{m_{l},1} \nonumber \\
& \quad +\!\!\sum_{\Lambda \in R_{z_{0}}}\sum_{(\nu_{k})_{k \in \Lambda^{c}}\in K_{\mu,\Lambda}^{+}}
\!\prod_{l \in \Lambda}\!(-1)^{(a_{l}z_{0} -w_{l})\delta^{(\mathbf{j},l)}_{I\hspace{-.1em}I}}
\!\prod_{u \in \Lambda^{c}}\!A_{\nu_{u}}^{(\mathbf{j},u)}(z_{0} \,;a_{u},m_{u},w_{u}).
\end{align}
\end{thm}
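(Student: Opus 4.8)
The plan is to obtain both assertions as direct specializations of Theorem~\ref{prop:main theorem 2} to the multiplicity-free setting, so that no new analytic input is required beyond what was already established there; all the work lies in collapsing the combinatorial sums $\sum_{\Lambda\in R_\rho}\sum_{(\nu_k)\in K_{n,\Lambda}^{\pm}}$ to single-index sums.

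First I would pin down the poles. Under the standing hypothesis every pole of $\Phi^{(\mathbf{j})}$ lying in $\mathfrak{R}$ is of the form $\rho=\frac{w_l+\mu_l}{a_l}$ for a \emph{unique} pair $(l,\mu_l)$ with $1\le l\le r$ and $0\le\mu_l\le a_l-1$, and at such a $\rho$ only the $l$-th factor is singular, i.e. $\delta_{\mathbb{Z}}(a_l\rho-w_l)=1$ while $\delta_{\mathbb{Z}}(a_u\rho-w_u)=0$ for $u\ne l$. Consequently the only subsets lying in $R_\rho$ are $\emptyset$ and $\{l\}$. I would then rule out $\Lambda=\emptyset$: since $\sum_{\lambda\in\emptyset}m_\lambda=0$, the defining relation $n=\sum_{\lambda\in\Lambda}m_\lambda-\sum_{k\in\Lambda^c}\nu_k$ forces $n\le0$, so $K_{n,\emptyset}^{-}=\varnothing$ whenever $n\ge1$. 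Hence in every $n$-sum only $\Lambda=\{l\}$ survives, the index set $K_{n,\{l\}}^{-}$ becomes $\{(\nu_k)_{k\ne l}:n=m_l-\sum_{k\ne l}\nu_k\}$, and in particular $n$ can never exceed $m_l$, which explains why the range $n=1,\dots,|\mathbf{m}|$ contracts to $n=1,\dots,\max_{j}\{m_j\}$ in \eqref{eq:mutiplicity free 2}.

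Next I would evaluate the surviving coefficients. The factor $\prod_{\lambda\in\{l\}}(-1)^{(a_l\rho-w_l)\delta^{(\mathbf{j},l)}_{I\hspace{-.1em}I}}$ reduces to $(-1)^{\mu_l\delta^{(\mathbf{j},l)}_{I\hspace{-.1em}I}}$ because $a_l\rho-w_l=\mu_l$. For each $u\ne l$ the point $\rho$ is regular, so $\delta_{\mathbb{Z}}(a_u\rho-w_u)=0$ and the second branch of \eqref{eq:fund prop} gives $A_{\nu_u}^{(\mathbf{j},u)}(\rho\,;a_u,m_u,w_u)=(-1)^{\nu_u}\frac{(m_u)_{\nu_u}}{\nu_u!}\varphi_{m_u+\nu_u}^{(\mathbf{j},u)}(a_u\rho-w_u)\,a_u^{m_u+\nu_u}$. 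Substituting these identifications into the $n=1$ identity \eqref{eq:main result 2} yields part~(1), i.e. \eqref{eq:mutiplicity free 1}, and substituting them into \eqref{eq:main result 3} yields part~(2), i.e. \eqref{eq:mutiplicity free 2}; equivalently one may start from the $d_v^{(\mu_v)}$-reformulation in the Remark and use that $d_v^{(\mu_v)}=1$ here. For the displayed special case \eqref{eq:mutiplicity free 1.2} I would set $m_1=\dots=m_r=1$ in \eqref{eq:mutiplicity free 1}: the constraint $1=m_l-\sum_{k\ne l}\nu_k=1-\sum_{k\ne l}\nu_k$ forces every $\nu_k=0$, so each factor collapses to $\varphi_1^{(\mathbf{j},u)}\!\bigl(a_u\frac{w_l+\mu_l}{a_l}-w_u\bigr)a_u$.

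Since Theorem~\ref{prop:main theorem 2} already supplies the identities being specialized, there is no genuine analytic obstacle here; the only points requiring care are the bookkeeping ones just flagged—verifying that $\Lambda=\emptyset$ never contributes to a $K^{-}$-sum, that the multiplicity-free hypothesis indeed forces $d_v^{(\mu_v)}=1$ so that no averaging weights survive, and that the right-hand side of \eqref{eq:mutiplicity free 2} must be left in its general $R_{z_0}$ form because the arbitrary base point $z_0\in\mathfrak{R}$ need not itself be a multiplicity-free pole of $\Phi^{(\mathbf{j})}$.
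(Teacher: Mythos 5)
Your proposal is correct and follows essentially the same route as the paper, which likewise obtains Theorem~4.2 by specializing Theorems~3.1 and 3.2 under the multiplicity-free hypothesis (so that $R_\rho=\{\emptyset,\{l\}\}$ with $K^-_{n,\emptyset}=\varnothing$ for $n\ge 1$, the coefficients $A_{\nu_u}^{(\mathbf{j},u)}$ reduce to the regular branch of Lemma~2.1, and $d_v^{(\mu_v)}=1$). Your write-up merely makes explicit the bookkeeping the paper leaves implicit, including the correct observations that the $n$-range contracts to $\max_j\{m_j\}$ and that the $R_{z_0}$ term in part~(2) cannot be collapsed since $z_0$ is arbitrary.
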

\begin{exa}
When we consider the case of $(j_{I},j_{I\hspace{-.1em}I})=(r,0)$, (\ref{eq:mutiplicity free 1}) is none other than Beck's reciprocity (Theorem\,2 in \cite{B})
\begin{align}
& \sum_{l=1}^{r}\sum_{\mu_{l}=0}^{a_{l}-1}
\sum_{\substack{1=m_{l}-\sum_{1\leq k\not=l\leq r}\nu_{k}, \\ \nu_{1},\ldots,\nu_{r}\geq 0}}\prod_{1\leq u\not=l\leq r}\!\!\left\{(-1)^{\nu_{u}}\frac{(m_{u})_{\nu_{u}}}{\nu_{u}!}\varphi_{m_{u}+\nu_{u}}^{(I)}\left(a_{u}\frac{w_{l}+\mu_{l}}{a_{l}}-w_{u} \right)a_{u}^{m_{u}+\nu_{u}}\right\}  \nonumber \\
& \quad =\pi^{r-1}\sin\left(\frac{\pi{r}}{2}\right)\prod_{l=1}^{r}a_{l}\delta_{m_{l},1}.  
\end{align}
On the other hand, by putting $(j_{I},j_{I\hspace{-.1em}I})=(0,r)$, we obtain a cosecant analogue of Beck's result
\begin{align}
& \sum_{l=1}^{r}\sum_{\mu_{l}=0}^{a_{l}-1}
\sum_{\substack{1=m_{l}-\sum_{1\leq k\not=l\leq r}\nu_{k}, \\ \nu_{1},\ldots,\nu_{r}\geq 0}}\!\!\!\!\!\!\!\!\!\!\!\!\!\!\!\!(-1)^{\mu_{l}}
\prod_{1\leq u\not=l\leq r}\!\!\left\{(-1)^{\nu_{u}}\frac{(m_{u})_{\nu_{u}}}{\nu_{u}!}\varphi_{m_{u}+\nu_{u}}^{(I\hspace{-.1em}I)}\left(a_{u}\frac{w_{l}+\mu_{l}}{a_{l}}-w_{u} \right)a_{u}^{m_{u}+\nu_{u}}\right\}
=0.  
\end{align}
\end{exa}

\subsection{$\mathbf{w}=(0,\ldots,0)$ case}
In this subsection, we assume $\mathbf{w}=\mathbf{0}:=(0,\ldots,0)$ and $a_{1},\ldots,a_{r}$ are pairwise relatively prime. 
Under this condition, all poles of $a_{j}^{m_{j}}\varphi_{m_{j}}^{(J)}(a_{j}z)$ for each $j=1,\ldots,r$ on $\mathfrak{R}$ are 
$$
\frac{\mu_{j}}{a_{j}}\quad (j=1,\ldots,r,\,\, {\text{and}}\,\, \mu_{j}=0,\ldots,a_{j}-1).
$$
Further, $\delta_{\mathbb{Z}}(0)=1$ and for all $l=1,\ldots,r$, and $\mu_{j}=1,\ldots,a_{j}-1$,
$$
\delta_{\mathbb{Z}}\left(a_{l}\frac{\mu_{j}}{a_{j}}\right)=
\begin{cases}
    1 & (\text{if $l=j$}) \\
    0 & (\text{if $l\not =j$})
\end{cases}, \,\,\,\,\,\,
d_{v}^{(\mu_{v})}=
\begin{cases}
    r & (\text{if $\mu_{v}=0$}) \\
    1 & (\text{otherwise})
\end{cases}.
$$
Therefore, Theorems\,\ref{prop:main theorem}, \ref{prop:main theorem 2} degenerate to the following results. 
\begin{thm}
\label{prop:main theorem w zero}
Let 
\begin{align}
M_{n}^{(\mathbf{j})}(\mathbf{a},\mathbf{m})
:=&\sum_{\Lambda \in R_{0}}\sum_{(\nu_{k})_{k \in \Lambda^{c}}\in K_{n,\Lambda}^{-}}
\prod_{u \in \Lambda^{c}}(-1)^{m_{u}}\binom{m_{u}+\nu_{u}-1}{m_{u}-1}\alpha_{m_{u}+\nu_{u}}^{(\mathbf{j},u)}a_{u}^{m_{u}+\nu_{u}} \nonumber \\
=&\sum_{N=1}^{r-1}
\sum_{1\leq \lambda_{1} <\ldots<\lambda_{N} \leq r}
\sum_{\substack{n=|\mathbf{m}|-\sum_{k=1}^{N}(\nu_{\lambda_{k}}+m_{\lambda_{k}}), \\ \nu_{\lambda_{1}},\ldots,\nu_{\lambda_{N}}\geq 0}}
\prod_{u=1}^{N}(-1)^{m_{u}}\binom{m_{u}+\nu_{u}-1}{m_{u}-1}\alpha_{m_{u}+\nu_{u}}^{(\mathbf{j},u)}a_{u}^{m_{u}+\nu_{u}}. \nonumber
\end{align}
We obtain
\begin{align}
\Phi^{(\mathbf{j})}(z \,;\mathbf{a},\mathbf{m},\mathbf{0})&=\cos\left(\frac{\pi{r}}{2}\right)\pi^{r}\delta_{j_{I\hspace{-.1em}I},0}\prod_{l=1}^{r}a_{l}\delta_{m_{l},1}
+\sum_{n=1}^{|\mathbf{m}|}M_{n}^{(\mathbf{j})}(\mathbf{a},\mathbf{m})\varphi_{n}^{(J)}(z) \nonumber \\
& \quad +\sum_{n=1}^{\max_{j \in [r]}{\{m_{j}\}}}\sum_{l=1}^{r}\sum_{\mu_{l}=1}^{a_{l}-1}
\sum_{\substack{n=m_{l}-\sum_{1\leq k\not=l\leq r}\nu_{k}, \\ \nu_{1},\ldots,\nu_{r}\geq 0}}(-1)^{\mu_{l}\delta^{(\mathbf{j},l)}_{I\hspace{-.1em}I}} \nonumber \\
\label{eq:w zero}
& \quad \cdot \prod_{1\leq u\not=l\leq r}\left\{(-1)^{\nu_{u}}\frac{(m_{u})_{\nu_{u}}}{\nu_{u}!}\varphi_{m_{u}+\nu_{u}}^{(\mathbf{j},u)}\left(a_{u}\frac{\mu_{l}}{a_{l}} \right)a_{u}^{m_{u}+\nu_{u}}\right\}
\varphi_{n}^{(J)}\left(z-\frac{\mu_{l}}{a_{l}} \right).
\end{align}
\end{thm}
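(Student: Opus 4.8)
The statement is a direct specialization of Theorem~\ref{prop:main theorem} to $\mathbf{w}=\mathbf{0}$ with $a_1,\ldots,a_r$ pairwise relatively prime, so the plan is to substitute these hypotheses into (\ref{eq:main result}) and reorganize the pole sum. First I would record, using the facts collected just above the statement, that the poles of $\Phi^{(\mathbf{j})}(z\,;\mathbf{a},\mathbf{m},\mathbf{0})$ in $\mathfrak{R}$ are exactly $0$ together with the numbers $\mu_l/a_l$ for $l\in[r]$ and $\mu_l=1,\ldots,a_l-1$; relative primality guarantees these are pairwise distinct (if $\mu_k/a_k=\mu_l/a_l$ then $a_k\mid\mu_k$, which is impossible for $1\le\mu_k\le a_k-1$). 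Thus the outer sum $\sum_\rho$ in (\ref{eq:main result}) is a genuine sum over these distinct points, and I would split it into the contribution of $\rho=0$ and the contributions of the $\rho=\mu_l/a_l$ with $\mu_l\ge1$.

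For $\rho=\mu_l/a_l$ with $1\le\mu_l\le a_l-1$, the coprimality computation recorded above gives $\delta_{\mathbb{Z}}(a_k\mu_l/a_l)=1$ only for $k=l$, so $R_\rho=\{\emptyset,\{l\}\}$. For $n\ge1$ the set $K_{n,\emptyset}^-$ is empty, so $\Lambda=\emptyset$ contributes nothing and only $\Lambda=\{l\}$ survives; then $\Lambda^c=[r]\setminus\{l\}$ consists entirely of non-singular indices, so each $A_{\nu_u}^{(\mathbf{j},u)}$ takes its $\delta_{\mathbb{Z}}=0$ value $(-1)^{\nu_u}\frac{(m_u)_{\nu_u}}{\nu_u!}\varphi_{m_u+\nu_u}^{(\mathbf{j},u)}(a_u\mu_l/a_l)a_u^{m_u+\nu_u}$ coming from the Lemma at (\ref{eq:fund prop}). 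The single sign factor is $(-1)^{(a_l\mu_l/a_l)\delta^{(\mathbf{j},l)}_{I\hspace{-.1em}I}}=(-1)^{\mu_l\delta^{(\mathbf{j},l)}_{I\hspace{-.1em}I}}$, and the constraint $n=m_l-\sum_{k\ne l}\nu_k$ is exactly $K_{n,\{l\}}^-$; since $n\le m_l\le\max_j m_j$, the outer index can be capped at $\max_{j}m_j$. This reproduces the last block of (\ref{eq:w zero}).

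For $\rho=0$ one has $\delta_{\mathbb{Z}}(a_l\cdot0)=1$ for every $l$, so every factor is singular and $R_0=2^{[r]}$. Each sign factor is $(-1)^{0\cdot\delta}=1$, and for $u\in\Lambda^c$ the coefficient $A_{\nu_u}^{(\mathbf{j},u)}(0\,;a_u,m_u,0)$ now takes its $\delta_{\mathbb{Z}}=1$ value $(-1)^{m_u}\binom{m_u+\nu_u-1}{m_u-1}\alpha_{m_u+\nu_u}^{(\mathbf{j},u)}a_u^{m_u+\nu_u}$, which is precisely the summand defining $M_n^{(\mathbf{j})}(\mathbf{a},\mathbf{m})$. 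Hence the $\rho=0$ part equals $\sum_{n=1}^{|\mathbf{m}|}M_n^{(\mathbf{j})}(\mathbf{a},\mathbf{m})\varphi_n^{(J)}(z)$, and the two displayed forms of $M_n$ agree by writing $\Lambda^c=\{\lambda_1<\cdots<\lambda_N\}$ and rewriting the defining relation of $K_{n,\Lambda}^-$ as $n=|\mathbf{m}|-\sum_{k}(m_{\lambda_k}+\nu_{\lambda_k})$. The constant term $\cos(\pi r/2)\pi^r\delta_{j_{I\hspace{-.1em}I},0}\prod_l a_l\delta_{m_l,1}$ carries over verbatim from (\ref{eq:main result}), and collecting the three pieces yields (\ref{eq:w zero}).

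The only genuinely delicate point, and the one I would check most carefully, is the treatment of the pole at $\rho=0$, where all $r$ factors are simultaneously singular: one must use the $\delta_{\mathbb{Z}}=1$ branch of $A_\nu^{(\mathbf{j},u)}$ even for the indices $u\in\Lambda^c$ assigned the analytic part of the Laurent expansion. This is exactly what replaces the values $\varphi_{m_u+\nu_u}^{(\mathbf{j},u)}(\cdots)$ by the constants $\alpha_{m_u+\nu_u}^{(\mathbf{j},u)}$ and so packages the Bernoulli/zeta data into $M_n$; everything else is bookkeeping built directly on the classification of poles recorded at the start.
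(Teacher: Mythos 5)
Your proposal is correct and is essentially the paper's own argument: the paper proves this theorem by exactly the degeneration you carry out, recording that pairwise coprimality forces $\delta_{\mathbb{Z}}(a_k\mu_l/a_l)=1$ only for $k=l$ (so $R_{\mu_l/a_l}=\{\emptyset,\{l\}\}$, with $\emptyset$ killed by $K^-_{n,\emptyset}=\emptyset$) while at $\rho=0$ every index is singular (so $R_0=2^{[r]}$ and the $\delta_{\mathbb{Z}}=1$ branch of $A^{(\mathbf{j},u)}_{\nu_u}$ produces the $\alpha$-constants packaged into $M_n^{(\mathbf{j})}(\mathbf{a},\mathbf{m})$), and then substituting into the main identity.

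One caveat: your parenthetical claim that the two displayed forms of $M_n^{(\mathbf{j})}$ agree is not quite right as printed. The first form includes $\Lambda=[r]$, i.e.\ $\Lambda^c=\emptyset$, whose empty product contributes $1$ at $n=|\mathbf{m}|$ --- this is forced, being the leading coefficient of the order-$|\mathbf{m}|$ pole of $\Phi^{(\mathbf{j})}$ at $z=0$ (check $r=2$, $\mathbf{a}=(1,1)$, $\mathbf{m}=(1,1)$, $j_{I\hspace{-.1em}I}=0$: then $\Phi=\varphi_2^{(I)}(z)-\pi^2$, so $M_2=1$, whereas the second form gives $M_2=0$) --- while the second form starts its sum at $N=1$ and so omits this $N=0$ term. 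Your derivation uses the first (correct) form throughout, so the identity you prove is the right one; the defect is in the paper's second expression for $M_n^{(\mathbf{j})}$ (its $N$-sum should begin at $N=0$), an inaccuracy your proof inherits only in the one sentence asserting the two forms coincide.
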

\begin{thm}
\label{prop:main theorem 2 w zero}
{\rm{(1)}}\,
We have
\begin{align}
& \sum_{l=1}^{r}\sum_{\mu_{l}=1}^{a_{l}-1}
\sum_{\substack{1=m_{l}-\sum_{1\leq k\not=l\leq r}\nu_{k}, \\ \nu_{1},\ldots,\nu_{r}\geq 0}}\!\!\!\!\!\!\!\!\!\!\!\!\!\!(-1)^{\mu_{l}\delta^{(\mathbf{j},l)}_{I\hspace{-.1em}I}}\!\!\!\prod_{1\leq u\not=l\leq r}\left\{(-1)^{\nu_{u}}\frac{(m_{u})_{\nu_{u}}}{\nu_{u}!}\varphi_{m_{u}+\nu_{u}}^{(\mathbf{j},u)}\left(a_{u}\frac{\mu_{l}}{a_{l}} \right)a_{u}^{m_{u}+\nu_{u}}\right\} \nonumber \\
\label{eq:w zero 1}
& \quad =\pi^{r-1}\sin\left(\frac{\pi{r}}{2}\right)\delta_{j_{I\hspace{-.1em}I},0}\prod_{l=1}^{r}a_{l}\delta_{m_{l},1}-M_{1}^{(\mathbf{j})}(\mathbf{a},\mathbf{m}).
\end{align}
In particular, for $\mathbf{m}=\mathbf{1}:=(1,\ldots,1)$, 
\begin{align}
\label{eq:general Zagier}
\sum_{l=1}^{r}\sum_{\mu_{l}=1}^{a_{l}-1}
(-1)^{\mu_{l}\delta^{(\mathbf{j},l)}_{I\hspace{-.1em}I}}\prod_{1\leq u\not=l\leq r}\left\{\varphi_{1}^{(\mathbf{j},u)}\left(a_{u}\frac{\mu_{l}}{a_{l}} \right)a_{u}\right\} =\pi^{r-1}\sin\left(\frac{\pi{r}}{2}\right)\delta_{j_{I\hspace{-.1em}I},0}\prod_{l=1}^{r}a_{l}-M_{1}^{(\mathbf{j})}(\mathbf{a},\mathbf{1}).
\end{align}
{\rm{(2)}}\,
For any $\mu \in \mathbb{Z}_{\geq 0}$ and $z_{0} \in \mathfrak{R}$, 
\begin{align}
& \sum_{n=1}^{\max_{j \in [r]}{\{m_{j}\}}}\sum_{l=1}^{r}\sum_{\mu_{l}=1}^{a_{l}-1}
\sum_{\substack{n=m_{l}-\sum_{1\leq k\not=l\leq r}\nu_{k} \\ \nu_{1},\ldots,\nu_{r}\geq 0}}\!(-1)^{\mu_{l}\delta^{(\mathbf{j},l)}_{I\hspace{-.1em}I}}\! \nonumber \\
& \quad \cdot \prod_{1\leq u\not=l\leq r}\left\{(-1)^{\nu_{u}}\frac{(m_{u})_{\nu_{u}}}{\nu_{u}!}\varphi_{m_{u}+\nu_{u}}^{(\mathbf{j},u)}\left(a_{u}\frac{\mu_{l}}{a_{l}} \right)a_{u}^{m_{u}+\nu_{u}}\right\}\!
A_{\mu}^{(J)}\left(z_{0} \,;1,n,\frac{\mu_{l}}{a_{l}}\right) \nonumber \\
& = -\cos\left(\frac{\pi{r}}{2}\right)\pi^{r}\delta_{\mu,0}\delta_{j_{I\hspace{-.1em}I},0}\prod_{l=1}^{r}a_{l}\delta_{m_{l},1} 
-\sum_{n=1}^{|\mathbf{m}|}M_{n}^{(\mathbf{j})}(\mathbf{a},\mathbf{m})A_{\mu}^{(J)}\left(z_{0} \,;1,n,0\right)  \nonumber \\
\label{eq:w zero 2.1}
& \quad  +\sum_{\Lambda \in R_{z_{0}}}\sum_{(\nu_{k})_{k \in \Lambda^{c}}\in K_{\mu, \Lambda}^{+}}
\prod_{l \in \Lambda}(-1)^{a_{l}z_{0}\delta^{(\mathbf{j},l)}_{I\hspace{-.1em}I}}
\prod_{u \in \Lambda^{c}}A_{\nu_{u}}^{(\mathbf{j},u)}(z_{0} \,;a_{u},m_{u},0).
\end{align}
In particular, by taking $z_{0}=0$, 
\begin{align}
& \sum_{n=1}^{\max_{j \in [r]}{\{m_{j}\}}}\sum_{l=1}^{r}\sum_{\mu_{l}=1}^{a_{l}-1}
\sum_{\substack{n=m_{l}-\sum_{1\leq k\not=l\leq r}\nu_{k} \\ \nu_{1},\ldots,\nu_{r}\geq 0}}\!(-1)^{\mu_{l}\delta^{(\mathbf{j},l)}_{I\hspace{-.1em}I}} \nonumber \\
& \quad \cdot \prod_{1\leq u\not=l\leq r}\left\{(-1)^{\nu_{u}}\frac{(m_{u})_{\nu_{u}}}{\nu_{u}!}\varphi_{m_{u}+\nu_{u}}^{(\mathbf{j},u)}\left(a_{u}\frac{\mu_{l}}{a_{l}} \right)a_{u}^{m_{u}+\nu_{u}}\right\}\!
(-1)^{\mu}\frac{(n)_{\mu}}{\mu !}\varphi_{n+\mu}^{(J)}\left(-\frac{\mu_{l}}{a_{l}}\right) \nonumber \\
& = -\cos\left(\frac{\pi{r}}{2}\right)\pi^{r}\delta_{\mu,0}\delta_{j_{I\hspace{-.1em}I},0}\prod_{l=1}^{r}a_{l}\delta_{m_{l},1} 
-\sum_{n=1}^{|\mathbf{m}|}M_{n}^{(\mathbf{j})}(\mathbf{a},\mathbf{m})
(-1)^{n}\binom{n+\mu -1}{n-1}\alpha_{\mu +n}^{(J)} \nonumber \\
\label{eq:w zero 2.2}
& \quad  +\sum_{\Lambda \in R_{0}}\sum_{(\nu_{k})_{k \in \Lambda^{c}}\in K_{\mu, \Lambda}^{+}}
\prod_{u \in \Lambda^{c}}\left\{(-1)^{m_{u}}\binom{m_{u}+\nu_{u}-1}{m_{u}-1}\alpha_{m_{u}+\nu_{u}}^{(\mathbf{j},u)}a_{u}^{m_{u}+\nu_{u}}\right\}.
\end{align}
\end{thm}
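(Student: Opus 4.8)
The plan is to obtain this theorem entirely as a specialization of Theorems~\ref{prop:main theorem} and \ref{prop:main theorem 2}: the two standing hypotheses of the subsection, $\mathbf{w}=\mathbf{0}$ and the pairwise coprimality of $a_{1},\ldots,a_{r}$, make the pole structure of $\Phi^{(\mathbf{j})}(z\,;\mathbf{a},\mathbf{m},\mathbf{0})$ in $\mathfrak{R}$ completely transparent, and no fresh analytic input (Liouville's theorem or the limit at $i\infty$) is required since all of that already sits inside the general statements. First I would record the pole structure. The poles of the $l$-th factor are the points $\mu_{l}/a_{l}$ with $0\le\mu_{l}\le a_{l}-1$; because $\delta_{\mathbb{Z}}(0)=1$, the point $\rho=0$ is a common pole of all $r$ factors, while pairwise coprimality forces each $\rho=\mu_{l}/a_{l}$ with $\mu_{l}\neq 0$ to be a pole of the $l$-th factor \emph{only}. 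Concretely $R_{0}$ is the full power set of $[r]$, whereas $R_{\mu_{l}/a_{l}}=\{\varnothing,\{l\}\}$, so the nonzero poles are multiplicity free. This is exactly the dichotomy $d_{v}^{(\mu_{v})}=r$ for $\mu_{v}=0$ and $d_{v}^{(\mu_{v})}=1$ otherwise recorded before the theorem.

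Next I would split the pole sum in (\ref{eq:main result}) along this dichotomy. At each nonzero pole $\mu_{l}/a_{l}$ only $\Lambda=\{l\}$ contributes a principal part, with $\Lambda^{c}=[r]\setminus\{l\}$ and $K^{-}_{n,\{l\}}$ enforcing $n=m_{l}-\sum_{k\neq l}\nu_{k}$; since $\sgn^{(\mathbf{j},l)}(\mu_{l}/a_{l}\,;a_{l},0)=(-1)^{\mu_{l}\delta^{(\mathbf{j},l)}_{I\hspace{-.1em}I}}$ this reproduces verbatim the multiplicity-free summand of Theorem~\ref{prop:main theorem multiplicity free} over the restricted range $\mu_{l}=1,\ldots,a_{l}-1$. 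The degenerate pole $\rho=0$ must be treated separately: here the $\Lambda$-prefactor $(-1)^{(a_{l}\cdot 0-0)\delta^{(\mathbf{j},l)}_{I\hspace{-.1em}I}}=1$, one has $\sgn^{(\mathbf{j},l)}(0\,;a_{l},0)=1$, and the regular coefficients collapse to $A_{\nu_{u}}^{(\mathbf{j},u)}(0\,;a_{u},m_{u},0)=(-1)^{m_{u}}\binom{m_{u}+\nu_{u}-1}{m_{u}-1}\alpha^{(\mathbf{j},u)}_{m_{u}+\nu_{u}}a_{u}^{m_{u}+\nu_{u}}$. Collecting the coefficient of $\varphi_{n}^{(J)}(z)$ from $\rho=0$ is then by definition $M_{n}^{(\mathbf{j})}(\mathbf{a},\mathbf{m})$, and its two displayed forms are merely the two ways of indexing the same sum (by $\Lambda\in R_{0}$ through $K^{-}_{n,\Lambda}$, or by the size $N=|\Lambda^{c}|$ together with the ordered complement). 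This produces (\ref{eq:w zero}), and part~(1) as well as (\ref{eq:general Zagier}) follow by feeding the decomposition into (\ref{eq:main result 2}).

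For part~(2) I would apply Theorem~\ref{prop:main theorem 2}(2) and again isolate the $\rho=0$ contribution, whose coefficient $A_{\mu}^{(J)}(z_{0}\,;1,n,0)$ is pulled out to give the extra term $-\sum_{n}M_{n}^{(\mathbf{j})}(\mathbf{a},\mathbf{m})A_{\mu}^{(J)}(z_{0}\,;1,n,0)$, while the nonzero poles supply the main triple sum, yielding (\ref{eq:w zero 2.1}). The specialization $z_{0}=0$ in (\ref{eq:w zero 2.2}) is then a direct evaluation of these $A^{(J)}$-coefficients through the fundamental expansion (\ref{eq:fund prop}): at a nonzero pole $\delta_{\mathbb{Z}}(0-\mu_{l}/a_{l})=0$, so $A_{\mu}^{(J)}(0\,;1,n,\mu_{l}/a_{l})=(-1)^{\mu}\frac{(n)_{\mu}}{\mu!}\varphi_{n+\mu}^{(J)}(-\mu_{l}/a_{l})$, whereas at $\rho=0$ the integral case gives $A_{\mu}^{(J)}(0\,;1,n,0)=(-1)^{n}\binom{n+\mu-1}{n-1}\alpha_{\mu+n}^{(J)}$.

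I expect the real content to be bookkeeping rather than analysis. The delicate point is verifying that the degenerate pole $\rho=0$ contributes exactly the single clean term $M_{n}^{(\mathbf{j})}$ and does not leak into the multiplicity-free sums—this is precisely why those sums run over $\mu_{l}=1,\ldots,a_{l}-1$ rather than $0,\ldots,a_{l}-1$—and, as a consistency check, that the weight $1/d_{v}^{(\mu_{v})}$ of the Remark's reformulation collapses to $1$ at the simple poles and to $1/r$ at $\rho=0$, the latter being absorbed into $M_{n}^{(\mathbf{j})}$. Once this separation is pinned down, every remaining identity is the substitution of the explicit values of $\alpha^{(J)}$, $\sgn^{(J)}$ and $A^{(J)}$ into the already-established general formulas.
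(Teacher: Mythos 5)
Your proposal is correct and follows essentially the same route as the paper, which records exactly the pole dichotomy you describe ($\delta_{\mathbb{Z}}(0)=1$, $\delta_{\mathbb{Z}}(a_{l}\mu_{j}/a_{j})=\delta_{l,j}$ for $\mu_{j}\neq 0$, and $d_{v}^{(\mu_{v})}=r$ or $1$) and then simply states that Theorems~\ref{prop:main theorem} and \ref{prop:main theorem 2} ``degenerate'' to this theorem; your write-up supplies the same bookkeeping in detail, correctly isolating the $\rho=0$ contribution as $M_{n}^{(\mathbf{j})}(\mathbf{a},\mathbf{m})$ and evaluating $A_{\mu}^{(J)}$ at $z_{0}=0$ via (\ref{eq:fund prop}).
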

\begin{exa}
By putting $(j_{I},j_{I\hspace{-.1em}I})=(r,0)$ in (\ref{eq:general Zagier}), we obtain Zagier's result
\begin{align}
\pi^{r-1}\sum_{l=1}^{r}\sum_{\mu_{l}=1}^{a_{l}-1}
\prod_{u\not=l}\left\{\cot\left(\frac{{\pi}a_{u}\mu_{l}}{a_{l}}\right)a_{u}\right\}
& =\sin\left(\frac{{\pi}r}{2}\right)\pi^{r-1}\prod_{l=1}^{r}a_{l} \nonumber \\
\label{eq:Zagier}
& \quad 
-\sum_{N=1}^{r-1}
\sum_{1\leq \lambda_{1} <\ldots<\lambda_{N} \leq r}
\sum_{\substack{r-1-N=\sum_{k=1}^{N}\nu_{\lambda_{k}}, \\ \nu_{\lambda_{1}},\ldots,\nu_{\lambda_{N}}\geq 0}}
\prod_{u=1}^{N}\alpha_{\nu_{u}+1}^{(I)}a_{u}^{\nu_{u}+1}.
\end{align}
Further, if we consider the case of $(j_{I},j_{I\hspace{-.1em}I})=(0,r)$, then 
\begin{align}
\pi^{r-1}\sum_{l=1}^{r}\sum_{\mu_{l}=1}^{a_{l}-1}(-1)^{\mu_{l}}
\prod_{u\not=l}\left\{\csc\left(\frac{{\pi}a_{u}\mu_{l}}{a_{l}}\right)a_{u}\right\}
& =-\sum_{N=1}^{r-1}
\sum_{1\leq \lambda_{1} <\ldots<\lambda_{N} \leq r}
\sum_{\substack{r-1-N=\sum_{k=1}^{N}\nu_{\lambda_{k}}, \\ \nu_{\lambda_{1}},\ldots,\nu_{\lambda_{N}}\geq 0}} \nonumber \\
& \quad \cdot
\prod_{u=1}^{N}\alpha_{\nu_{u}+1}^{(I\hspace{-.1em}I)}a_{u}^{\nu_{u}+1}.
\end{align}
This is a cosecant version of Zagier's reciprocity law.
\end{exa}

\subsection{$r=2,\mathbf{m}=(1,1)$ case}
In this subsection, we assume $r=2,\mathbf{m}=(1,1)$ and $a_{1},a_{2}$ are relatively prime. 
For this simple case, we obtain more explicit expressions for our main results. 
\begin{thm}
Let $I\leq K_{1}\leq K_{2}\leq I\hspace{-.1em}I$, 
and $A_{1}$, $A_{2}$ denote integers for which $A_{1}a_{2}+A_{2}a_{1}=1$ holds. 
For the following case
\begin{equation}
\label{eq:sgn cond}
(K_{1},K_{2},J)=(I,I,I),\,\,(I,I\hspace{-.1em}I,I),\,\,(I,I\hspace{-.1em}I,I\hspace{-.1em}I),\,\,(I\hspace{-.1em}I,I\hspace{-.1em}I,I),\,\,(I\hspace{-.1em}I,I\hspace{-.1em}I,I\hspace{-.1em}I),
\end{equation}
we obtain
\begin{align}
& a_{1}a_{2}\varphi_{1}^{(K_{1})}(a_{1}z-w_{1})\varphi_{1}^{(K_{2})}(a_{2}z-w_{2}) \nonumber \\
& =-\pi^{2}a_{1}a_{2}\delta_{K_{1},I}\delta_{K_{2},I} \nonumber \\
& \quad +\delta_{\mathbb{Z}}(a_{1}w_{2}-a_{2}w_{1})\sgn_{2}^{(K_{1},K_{2})}((a_{1},a_{2}),(w_{1},w_{2}),(A_{1},A_{2}))\varphi_{2}^{(J)}(z-(A_{1}w_{2}+A_{2}w_{1})) \nonumber \\
& \quad +a_{2}\sum_{\mu_{1} =0}^{a_{1}-1}{}^{\prime}(-1)^{\mu_{1}\delta_{K_{1},I\hspace{-.1em}I}} \varphi_{1}^{(K_{2})}\left(a_{2}\frac{w_{1}+\mu_{1}}{a_{1}}-w_{2} \right)
\varphi_{1}^{(J)}\left(z-\frac{w_{1}+\mu_{1}}{a_{1}} \right) \nonumber \\
\label{eq:r2 case}
& \quad +a_{1}\sum_{\mu_{2} =0}^{a_{2}-1}{}^{\prime}(-1)^{\mu_{2}\delta_{K_{2},I\hspace{-.1em}I}}
\varphi_{1}^{(K_{1})}\left(a_{1}\frac{w_{2}+\mu_{2}}{a_{2}}-w_{1} \right)
\varphi_{1}^{(J)}\left(z-\frac{w_{2}+\mu_{2}}{a_{2}} \right). 
\end{align}
Here, the sums run over non-singular points and
\begin{align}
\sgn_{2}^{(K_{1},K_{2})}((a_{1},a_{2}),(w_{1},w_{2}),(A_{1},A_{2}))
&:=\sgn^{(K_{1})}(A_{1}w_{2}+A_{2}w_{1} \,;a_{1}+a_{2},w_{1}+w_{2})\delta_{K_{1},K_{2}} \nonumber \\
 & \quad +\sgn^{(K_{1})}(A_{1}w_{2}+A_{2}w_{1} \,;a_{1},w_{1}) \nonumber \\
 & \quad \cdot\sgn^{(K_{2})}(A_{1}w_{2}+A_{2}w_{1} \,;a_{2},w_{2})(1-\delta_{K_{1},K_{2}}). \nonumber
\end{align}
\end{thm}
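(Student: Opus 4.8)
The plan is to obtain \eqref{eq:r2 case} as a direct specialization of Theorem~\ref{prop:main theorem} to $r=2$, $\mathbf{m}=(1,1)$, and then to collapse the three nested sums by exploiting the very sparse pole structure of a product of only two factors. First I would locate the poles of $\Phi^{(\mathbf{j})}(z;\mathbf{a},\mathbf{m},\mathbf{w})$ in $\mathfrak{R}$: the first factor contributes the $a_{1}$ points $(w_{1}+\mu_{1})/a_{1}$ ($0\le\mu_{1}\le a_{1}-1$) and the second the $a_{2}$ points $(w_{2}+\mu_{2})/a_{2}$, all lying in $\mathfrak{R}$ since $w_{1},w_{2}\in\mathfrak{R}$. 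The parity condition separating Case~$I$ from Case~$I\hspace{-.1em}I$ fixes the index $J$ of the output functions $\varphi_{n}^{(J)}$: running through $(K_{1},K_{2})\in\{(I,I),(I,I\hspace{-.1em}I),(I\hspace{-.1em}I,I\hspace{-.1em}I)\}$ against the parities of $a_{2}$ and $a_{1}+a_{2}$ reproduces exactly the five admissible triples in \eqref{eq:sgn cond}. The constant term is then immediate: $\cos(\pi r/2)\pi^{r}=-\pi^{2}$ for $r=2$, and $\delta_{j_{I\hspace{-.1em}I},0}\prod_{l}a_{l}\delta_{m_{l},1}=a_{1}a_{2}\delta_{K_{1},I}\delta_{K_{2},I}$.

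Next I would classify each pole as \emph{simple} (singular for exactly one factor) or a \emph{collision} (singular for both). A collision $\rho$ satisfies $(w_{1}+\mu_{1})/a_{1}=(w_{2}+\mu_{2})/a_{2}$, which rearranges to $a_{1}w_{2}-a_{2}w_{1}\in\mathbb{Z}$; hence collisions occur precisely when $\delta_{\mathbb{Z}}(a_{1}w_{2}-a_{2}w_{1})=1$, matching the indicator in front of the $\varphi_{2}^{(J)}$ term. When this holds, coprimality of $a_{1},a_{2}$ together with the Bézout relation $A_{1}a_{2}+A_{2}a_{1}=1$ pins down the unique collision point in $\mathfrak{R}$ as $\rho_{0}\equiv A_{1}w_{2}+A_{2}w_{1}$: substituting the Bézout identity gives $a_{1}\rho_{0}-w_{1}=A_{1}(a_{1}w_{2}-a_{2}w_{1})\in\mathbb{Z}$ and $a_{2}\rho_{0}-w_{2}=-A_{2}(a_{1}w_{2}-a_{2}w_{1})\in\mathbb{Z}$, so $\rho_{0}$ is indeed a common pole, and uniqueness in $\mathfrak{R}$ follows since the real parts of the poles are equally spaced by $1/a_{1}$ and $1/a_{2}$ respectively.

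At a simple pole of the first factor, $R_{\rho}=\{\emptyset,\{1\}\}$, and only $\Lambda=\{1\}$ produces a principal part. The constraint defining $K_{n,\{1\}}^{-}$ forces $\nu_{2}=0$ and $n=1$; since $\rho$ is \emph{not} a collision the second factor is regular there, so the $\delta_{\mathbb{Z}}(a_{2}\rho-w_{2})=0$ branch of \eqref{eq:fund prop} gives $A_{0}^{(\mathbf{j},2)}(\rho;a_{2},1,w_{2})=a_{2}\varphi_{1}^{(K_{2})}(a_{2}\rho-w_{2})$, while $\sgn^{(K_{1})}(\rho;a_{1},w_{1})=(-1)^{\mu_{1}\delta_{K_{1},I\hspace{-.1em}I}}$ because $a_{1}\rho-w_{1}=\mu_{1}$. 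This is exactly the third line of \eqref{eq:r2 case}, and the symmetric computation at the poles of the second factor gives the fourth line; the requirement that the arguments $a_{2}\rho-w_{2}$ and $a_{1}\rho-w_{1}$ remain regular is precisely why these two sums carry the prime excluding $\rho_{0}$.

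The decisive simplification occurs at the collision pole $\rho_{0}$, where $R_{\rho_{0}}=\{\emptyset,\{1\},\{2\},\{1,2\}\}$. The subsets $\Lambda=\{1\}$ and $\Lambda=\{2\}$ would a priori contribute to $\varphi_{1}^{(J)}(z-\rho_{0})$, but here the complementary factor is \emph{singular}, so its coefficient is $A_{0}^{(\mathbf{j},\cdot)}$ read off from the $\delta_{\mathbb{Z}}=1$ branch of \eqref{eq:fund prop}, which is proportional to $\alpha_{1}^{(J)}$; since $\alpha_{\mu}^{(J)}=0$ for odd $\mu$, both cross terms vanish. Thus only $\Lambda=\{1,2\}$ survives, forcing $n=2$ and contributing $\sgn^{(K_{1})}(\rho_{0};a_{1},w_{1})\,\sgn^{(K_{2})}(\rho_{0};a_{2},w_{2})\,\varphi_{2}^{(J)}(z-\rho_{0})$. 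It remains to match this with $\sgn_{2}^{(K_{1},K_{2})}$: for $K_{1}\neq K_{2}$ it is literally the displayed product, while for $K_{1}=K_{2}$ I would note that at a collision the two integer exponents add, giving $\sgn^{(K_{1})}(\rho_{0};a_{1},w_{1})\sgn^{(K_{1})}(\rho_{0};a_{2},w_{2})=\sgn^{(K_{1})}(\rho_{0};a_{1}+a_{2},w_{1}+w_{2})$. The main obstacle is exactly this collision analysis — the explicit Bézout identification and uniqueness of $\rho_{0}$, and above all the vanishing of the cross terms through $\alpha_{1}^{(J)}=0$, which is what collapses the double pole into the single clean $\varphi_{2}^{(J)}$ contribution.
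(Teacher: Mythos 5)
Your overall route coincides with the paper's: specialize Theorem~\ref{prop:main theorem} to $r=2$, $\mathbf{m}=(1,1)$, read off the constant $-\pi^{2}a_{1}a_{2}\delta_{K_{1},I}\delta_{K_{2},I}$, get the two primed sums from the simple poles, and handle the collision pole separately. Your observation that the cross terms $\Lambda=\{1\},\{2\}$ at the double pole die because $\alpha_{1}^{(J)}=0$ is correct and is in fact left implicit in the paper, so making it explicit is a genuine improvement in readability. The Bézout computations $a_{1}(A_{1}w_{2}+A_{2}w_{1})-w_{1}=A_{1}(a_{1}w_{2}-a_{2}w_{1})$, the equivalence of collisions with $\delta_{\mathbb{Z}}(a_{1}w_{2}-a_{2}w_{1})=1$, and the identification of the five triples in \eqref{eq:sgn cond} via the Case~$I$/Case~$I\hspace{-.1em}I$ parity dichotomy are all sound.

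However, the endgame has a genuine gap. You assert that $A_{1}w_{2}+A_{2}w_{1}$ \emph{is} the unique collision point $\rho_{0}$ in $\mathfrak{R}$. That is false in general: $A_{1}w_{2}+A_{2}w_{1}$ need not lie in $\mathfrak{R}$, and it is not even a well-defined point, since replacing $(A_{1},A_{2})$ by $(A_{1}+ta_{1},A_{2}-ta_{2})$ shifts it by $t(a_{1}w_{2}-a_{2}w_{1})\in\mathbb{Z}$. (For instance $a_{1}=2$, $a_{2}=3$, $w_{1}=0.9$, $w_{2}=0.35$, $(A_{1},A_{2})=(-1,2)$: the collision point in $\mathfrak{R}$ is $0.45$, while $A_{1}w_{2}+A_{2}w_{1}=1.45$.) What Theorem~\ref{prop:main theorem} actually produces at the true collision point $\rho_{0}=(w_{1}+\widetilde{\mu_{1}})/a_{1}=(w_{2}+\widetilde{\mu_{2}})/a_{2}\in\mathfrak{R}$ is the term $(-1)^{\widetilde{\mu_{1}}\delta^{(\mathbf{j},1)}_{I\hspace{-.1em}I}+\widetilde{\mu_{2}}\delta^{(\mathbf{j},2)}_{I\hspace{-.1em}I}}\varphi_{2}^{(J)}(z-\rho_{0})$, whereas the theorem's statement carries $\sgn_{2}^{(K_{1},K_{2})}(\ldots)\varphi_{2}^{(J)}(z-(A_{1}w_{2}+A_{2}w_{1}))$ with $A_{1}w_{2}+A_{2}w_{1}=\rho_{0}-(A_{1}\widetilde{\mu_{2}}+A_{2}\widetilde{\mu_{1}})$. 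Translating one into the other via the periodicity \eqref{eq:period} introduces the factor $(-1)^{(A_{1}\widetilde{\mu_{2}}+A_{2}\widetilde{\mu_{1}})\delta_{J,I\hspace{-.1em}I}}$, and the exponents inside the $\sgn$'s of $\sgn_{2}^{(K_{1},K_{2})}$ become $\widetilde{\mu_{i}}-a_{i}(A_{1}\widetilde{\mu_{2}}+A_{2}\widetilde{\mu_{1}})$ rather than $\widetilde{\mu_{i}}$; your sign check (``the two integer exponents add'') is performed at $\rho_{0}$ and never sees these corrections. Verifying that the net correction sign is $+1$ is exactly where the parity hypotheses of \eqref{eq:sgn cond} do work beyond fixing $J$ (e.g.\ for $(K_{1},K_{2},J)=(I\hspace{-.1em}I,I\hspace{-.1em}I,I)$ one needs $a_{1}+a_{2}$ even so that a unit shift of the argument leaves $\sgn^{(I\hspace{-.1em}I)}(\nullop\,;a_{1}+a_{2},w_{1}+w_{2})\varphi_{2}^{(I)}$ invariant); this is precisely the paper's case-by-case verification that $\widetilde{\sgn}_{2}^{((K_{1},K_{2}),J)}=1$, which occupies the second half of its proof and is absent from yours. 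The omission is repairable by a routine five-case parity computation, but without it the collision term is established only up to an unresolved sign.
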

\begin{proof}
The multiplicity free case 
$$
{\text{i.e.}}\,\,\,\frac{w_{1}+\mu_{1}}{a_{1}}\not=\frac{w_{2}+\mu_{2}}{a_{2}}\,\,\,(\mu_{1}=0,1,\ldots,a_{1}-1,\,\mu_{2}=0,1,\ldots,a_{2}-1)
$$
has been proved by some special cases of (\ref{eq:mutiplicity free}). 
Hence, it is enough to show another case. 
In the case, since $a_{1},a_{2}$ are relatively prime and $w_{1},w_{2} \in \mathfrak{R}$, there exists unique integers $\widetilde{\mu_{1}}\in \{0,1,\ldots,a_{1}-1\}$ and $\widetilde{\mu_{2}}\in \{0,1,\ldots,a_{2}-1\}$ such that 
$$
\rho_{0}:=\frac{w_{1}+\widetilde{\mu_{1}}}{a_{1}}=\frac{w_{2}+\widetilde{\mu_{2}}}{a_{2}},
$$
and 
\begin{align}
A_{1}w_{2}+A_{2}w_{1}&=\rho_{0}-(A_{1}\widetilde{\mu_{2}}+A_{2}\widetilde{\mu_{1}}), \nonumber \\ 
a_{1}(A_{1}w_{2}+A_{2}w_{1})-w_{1}
&=-a_{1}(A_{1}\widetilde{\mu_{2}}+A_{2}\widetilde{\mu_{1}})+\widetilde{\mu_{1}}, \nonumber \\ 
a_{2}(A_{1}w_{2}+A_{2}w_{1})-w_{2}
&=-a_{2}(A_{1}\widetilde{\mu_{2}}+A_{2}\widetilde{\mu_{1}})+\widetilde{\mu_{2}}, \nonumber \\
(a_{1}+a_{2})(A_{1}w_{2}+A_{2}w_{1})-(w_{1}+w_{2})
&=-(a_{1}+a_{2})(A_{1}\widetilde{\mu_{2}}+A_{2}\widetilde{\mu_{1}})+(\widetilde{\mu_{1}}+\widetilde{\mu_{2}}). \nonumber
\end{align}
Hence, from (\ref{eq:main result}), we have
\begin{align}
& a_{1}a_{2}\varphi_{1}^{(K_{1})}(a_{1}z-w_{1})\varphi_{1}^{(K_{2})}(a_{2}z-w_{2}) \nonumber \\
& =-\pi^{2}a_{1}a_{2}\delta_{K_{1},I}\delta_{K_{2},I} 
+(-1)^{\widetilde{\mu_{1}}\delta^{(\mathbf{j},1)}_{I\hspace{-.1em}I}+\widetilde{\mu_{2}}\delta^{(\mathbf{j},2)}_{I\hspace{-.1em}I}}\varphi_{2}^{(J)}(z-\rho_{0}) \nonumber \\
& \quad +a_{2}\sum_{\mu_{1} =0}^{a_{1}-1}{}^{\prime}(-1)^{\mu_{1}\delta_{K_{1},I\hspace{-.1em}I}}\varphi_{1}^{(K_{2})}\left(a_{2}\frac{w_{1}+\mu_{1}}{a_{1}}-w_{2} \right)
\varphi_{1}^{(J)}\left(z-\frac{w_{1}+\mu_{1}}{a_{1}} \right) \nonumber \\
& \quad +a_{1}\sum_{\mu_{2} =0}^{a_{2}-1}{}^{\prime}(-1)^{\mu_{2}\delta_{K_{2},I\hspace{-.1em}I}}\varphi_{1}^{(K_{1})}\left(a_{1}\frac{w_{2}+\mu_{2}}{a_{2}}-w_{1} \right)
\varphi_{1}^{(J)}\left(z-\frac{w_{2}+\mu_{2}}{a_{2}} \right). \nonumber
\end{align}
Here, we remark under the above five conditions (\ref{eq:sgn cond})
$$
(-1)^{\widetilde{\mu_{1}}\delta^{(\mathbf{j},1)}_{I\hspace{-.1em}I}+\widetilde{\mu_{2}}\delta^{(\mathbf{j},2)}_{I\hspace{-.1em}I}}
=(-1)^{\widetilde{\mu_{1}}\delta_{K_{1},I\hspace{-.1em}I}\delta_{K_{2},I\hspace{-.1em}I}+\widetilde{\mu_{2}}\delta_{K_{2},I\hspace{-.1em}I}}.
$$
Thus, by the definition of the signature (\ref{eq:def of sgn}) and the periodicity of $\varphi_{N}^{(J)}$ (\ref{eq:period}), 
\begin{align}
& \sgn_{2}^{(K_{1},K_{2})}((a_{1},a_{2}),(w_{1},w_{2}),(A_{1},A_{2}))\varphi_{2}^{(J)}(z-(A_{1}w_{2}+A_{2}w_{1})) \nonumber \\
& = \widetilde{\sgn}_{2}^{((K_{1},K_{2}),J)}((a_{1},a_{2}),(w_{1},w_{2}),(A_{1},A_{2}))
(-1)^{\widetilde{\mu_{1}}\delta_{K_{1},I\hspace{-.1em}I}\delta_{K_{2},I\hspace{-.1em}I}+\widetilde{\mu_{2}}\delta_{K_{2},I\hspace{-.1em}I}}\varphi_{2}^{(J)}(z-\rho_{0}), \nonumber
\end{align}
where 
\begin{align}
& \widetilde{\sgn}_{2}^{((K_{1},K_{2}),J)}((a_{1},a_{2}),(w_{1},w_{2}),(A_{1},A_{2})) \nonumber \\
&:=(-1)^{(A_{1}\widetilde{\mu_{2}}+A_{2}\widetilde{\mu_{1}})\{(a_{1}+a_{2})\delta_{K_{1},I\hspace{-.1em}I}+\delta_{J,I\hspace{-.1em}I}\}+\widetilde{\mu_{1}}\delta_{K_{1},I\hspace{-.1em}I}(1+\delta_{K_{2},I\hspace{-.1em}I})+\widetilde{\mu_{2}}(\delta_{K_{1},I\hspace{-.1em}I}+\delta_{K_{2},I\hspace{-.1em}I})}\delta_{K_{1},K_{2}} \nonumber \\
& \quad +(-1)^{(A_{1}\widetilde{\mu_{2}}+A_{2}\widetilde{\mu_{1}})(a_{1}\delta_{K_{1},I\hspace{-.1em}I}+a_{2}\delta_{K_{2},I\hspace{-.1em}I}+\delta_{J,I\hspace{-.1em}I})+\widetilde{\mu_{1}}\delta_{K_{1},I\hspace{-.1em}I}(1+\delta_{K_{2},I\hspace{-.1em}I})}(1-\delta_{K_{1},K_{2}}). \nonumber
\end{align}
Therefore, we claim that for the above five conditions (\ref{eq:sgn cond}), 
$$
\widetilde{\sgn}_{2}^{((K_{1},K_{2}),J)}((a_{1},a_{2}),(w_{1},w_{2}),(A_{1},A_{2}))=1
$$
and obtain the conclusion. 
\end{proof}
\begin{exa}
\noindent
{\rm{(0)}}\,{\rm{(Theorem\,2.4 in \cite{D})}}\,
$(K_{1},K_{2},J)=(I,I,I)$ case. 
\begin{align}
\cot{\pi (a_{1}z-w_{1})}\cot{\pi (a_{2}z-w_{2})}
&=-1-\frac{1}{a_{1}a_{2}}\delta_{\mathbb{Z}}(a_{1}w_{2}-a_{2}w_{1})\cot^{(1)}(\pi (z-(A_{1}w_{2}+A_{2}w_{1}))) \nonumber \\
& \quad 
+\frac{1}{a_{1}}{\sum_{\mu_{1}=0}^{a_{1}-1}}{}^{'}\cot\!\left(\!\pi \!\left(\!a_{2}\frac{w_{1}+\mu_{1}}{a_{1}}-w_{2}\!\right)\!\!\right)
\!\cot\!\left(\!\pi \!\left(\!z-\frac{w_{1}+\mu_{1}}{a_{1}}\!\right)\!\!\right) \nonumber \\
& \quad 
+\frac{1}{a_{2}}{\sum_{\mu_{2}=0}^{a_{2}-1}}{}^{'}\!\cot\!\left(\!\pi \!\left(\!a_{1}\frac{w_{2}+\mu_{2}}{a_{2}}-w_{1}\!\right)\!\!\right)
\!\cot\!\left(\!\pi \!\left(\!z-\frac{w_{2}+\mu_{2}}{a_{2}}\!\right)\!\!\right). \nonumber
\end{align}
{\rm{(1)}}\,
$(K_{1},K_{2},J)=(I,I\hspace{-.1em}I,I)$ case. 
\begin{align}
& \cot{\pi (a_{1}z-w_{1})}\csc{\pi (a_{2}z-w_{2})} \nonumber \\
& \quad 
=-\frac{(-1)^{a_{2}(A_{1}w_{2}+A_{2}w_{1})-w_{2}}}{a_{1}a_{2}}\delta_{\mathbb{Z}}(a_{1}w_{2}-a_{2}w_{1}) 
\cot^{(1)}(\pi (z-(A_{1}w_{2}+A_{2}w_{1}))) \nonumber \\
& \quad 
+\frac{1}{a_{1}}{\sum_{\mu_{1}=0}^{a_{1}-1}}{}^{'}\csc\!\left(\!\pi \!\left(\!a_{2}\frac{w_{1}+\mu_{1}}{a_{1}}-w_{2}\!\right)\!\!\right)
\!\cot\!\left(\!\pi \!\left(\!z-\frac{w_{1}+\mu_{1}}{a_{1}}\!\right)\!\!\right) \nonumber \\
\label{eq:generalized prot thm1.3}
& \quad 
+\frac{1}{a_{2}}{\sum_{\mu_{2}=0}^{a_{2}-1}}{}^{'}\!(-1)^{\mu_{2}}\cot\!\left(\!\pi \!\left(\!a_{1}\frac{w_{2}+\mu_{2}}{a_{2}}-w_{1}\!\right)\!\!\right)
\!\cot\!\left(\!\pi \!\left(\!z-\frac{w_{2}+\mu_{2}}{a_{2}}\!\right)\!\!\right). 
\end{align}
{\rm{(2)}}\,
$(K_{1},K_{2},J)=(I,I\hspace{-.1em}I,I\hspace{-.1em}I)$ case. 
\begin{align}
& \cot{\pi (a_{1}z-w_{1})}\csc{\pi (a_{2}z-w_{2})} \nonumber \\
& \quad 
=-\frac{(-1)^{a_{2}(A_{1}w_{2}+A_{2}w_{1})-w_{2}}}{a_{1}a_{2}}\delta_{\mathbb{Z}}(a_{1}w_{2}-a_{2}w_{1}) 
\csc^{(1)}(\pi (z-(A_{1}w_{2}+A_{2}w_{1}))) \nonumber \\
& \quad 
+\frac{1}{a_{1}}{\sum_{\mu_{1}=0}^{a_{1}-1}}{}^{'}\csc\!\left(\!\pi \!\left(\!a_{2}\frac{w_{1}+\mu_{1}}{a_{1}}-w_{2}\!\right)\!\!\right)
\!\csc\!\left(\!\pi \!\left(\!z-\frac{w_{1}+\mu_{1}}{a_{1}}\!\right)\!\!\right) \nonumber \\
\label{eq:generalized prot thm1.4}
& \quad 
+\frac{1}{a_{2}}{\sum_{\mu_{2}=0}^{a_{2}-1}}{}^{'}\!(-1)^{\mu_{2}}\cot\!\left(\!\pi \!\left(\!a_{1}\frac{w_{2}+\mu_{2}}{a_{2}}-w_{1}\!\right)\!\!\right)
\!\csc\!\left(\!\pi \!\left(\!z-\frac{w_{2}+\mu_{2}}{a_{2}}\!\right)\!\!\right). 
\end{align}
{\rm{(3)}}\,
$(K_{1},K_{2},J)=(I\hspace{-.1em}I,I\hspace{-.1em}I,I)$ case. 
\begin{align}
& \csc{\pi (a_{1}z-w_{1})}\csc{\pi (a_{2}z-w_{2})} \nonumber \\
& \quad 
=-\frac{(-1)^{(a_{1}+a_{2})(A_{1}w_{2}+A_{2}w_{1})-(w_{1}+w_{2})}}{a_{1}a_{2}}\delta_{\mathbb{Z}}(a_{1}w_{2}-a_{2}w_{1}) 
\cot^{(1)}(\pi (z-(A_{1}w_{2}+A_{2}w_{1}))) \nonumber \\
& \quad 
+\frac{1}{a_{1}}{\sum_{\mu_{1}=0}^{a_{1}-1}}{}^{'}\!(-1)^{\mu_{1}}\csc\!\left(\!\pi \!\left(\!a_{2}\frac{w_{1}+\mu_{1}}{a_{1}}-w_{2}\!\right)\!\!\right)
\!\cot\!\left(\!\pi \!\left(\!z-\frac{w_{1}+\mu_{1}}{a_{1}}\!\right)\!\!\right) \nonumber \\
\label{eq:generalized prot thm1.1}
& \quad 
+\frac{1}{a_{2}}{\sum_{\mu_{2}=0}^{a_{2}-1}}{}^{'}\!(-1)^{\mu_{2}}\csc\!\left(\!\pi \!\left(\!a_{1}\frac{w_{2}+\mu_{2}}{a_{2}}-w_{1}\!\right)\!\!\right)
\!\cot\!\left(\!\pi \!\left(\!z-\frac{w_{2}+\mu_{2}}{a_{2}}\!\right)\!\!\right).
\end{align}
{\rm{(4)}}\,
$(K_{1},K_{2},J)=(I\hspace{-.1em}I,I\hspace{-.1em}I,I\hspace{-.1em}I)$ case. 
\begin{align}
& \csc{\pi (a_{1}z-w_{1})}\csc{\pi (a_{2}z-w_{2})} \nonumber \\
& \quad 
=-\frac{(-1)^{(a_{1}+a_{2})(A_{1}w_{2}+A_{2}w_{1})-(w_{1}+w_{2})}}{a_{1}a_{2}}\delta_{\mathbb{Z}}(a_{1}w_{2}-a_{2}w_{1}) 
\csc^{(1)}(\pi (z-(A_{1}w_{2}+A_{2}w_{1}))) \nonumber \\
& \quad 
+\frac{1}{a_{1}}{\sum_{\mu_{1}=0}^{a_{1}-1}}{}^{'}\!(-1)^{\mu_{1}}\csc\!\left(\!\pi \!\left(\!a_{2}\frac{w_{1}+\mu_{1}}{a_{1}}-w_{2}\!\right)\!\!\right)
\!\csc\!\left(\!\pi \!\left(\!z-\frac{w_{1}+\mu_{1}}{a_{1}}\!\right)\!\!\right) \nonumber \\
\label{eq:generalized prot thm1.2}
& \quad 
+\frac{1}{a_{2}}{\sum_{\mu_{2}=0}^{a_{2}-1}}{}^{'}\!(-1)^{\mu_{2}}\csc\!\left(\!\pi \!\left(\!a_{1}\frac{w_{2}+\mu_{2}}{a_{2}}-w_{1}\!\right)\!\!\right)
\!\csc\!\left(\!\pi \!\left(\!z-\frac{w_{2}+\mu_{2}}{a_{2}}\!\right)\!\!\right). 
\end{align}
\end{exa}
(\ref{eq:generalized prot thm1.3}), (\ref{eq:generalized prot thm1.4}), (\ref{eq:generalized prot thm1.1}) and (\ref{eq:generalized prot thm1.2}) are generalizations 
of (\ref{eq:another prot thm1.3}), (\ref{eq:another prot thm1.4}), (\ref{eq:another prot thm1.1}) and (\ref{eq:another prot thm1.2}) respectively. 
Actually, by putting $w_{1}=w_{2}=0$, our results become Fukuhara's formulas. 
\begin{thm}
{\rm{(1)}}\,
\begin{align}
& a_{2}\sum_{\mu_{1} =0}^{a_{1}-1}{}^{\prime}(-1)^{\mu_{1}\delta_{K_{1},I\hspace{-.1em}I}}\varphi_{1}^{(K_{2})}\left(a_{2}\frac{w_{1}+\mu_{1}}{a_{1}}-w_{2} \right) \nonumber \\
\label{eq:r2 case 1}
& \quad +a_{1}\sum_{\mu_{2} =0}^{a_{2}-1}{}^{\prime}(-1)^{\mu_{2}\delta_{K_{2},I\hspace{-.1em}I}}\varphi_{1}^{(K_{1})}\left(a_{1}\frac{w_{2}+\mu_{2}}{a_{2}}-w_{1} \right)=0. 
\end{align}
{\rm{(2)}}\,
For any $\mu \in \mathbb{Z}_{\geq 0}$ and $z_{0} \in \mathfrak{R}$, 
\begin{align}
& a_{2}\sum_{\mu_{1} =0}^{a_{1}-1}{}^{\!\!\!\prime}(-1)^{\mu_{1}\delta_{K_{1},I\hspace{-.1em}I}}
\varphi_{1}^{(K_{2})}\left(a_{2}\frac{w_{1}+\mu_{1}}{a_{1}}-w_{2} \right)
A_{\mu}^{(J)}\left(z_{0} ;1,1,\frac{w_{1}+\mu_{1}}{a_{1}} \right) \nonumber \\
& \quad +a_{1}\sum_{\mu_{2} =0}^{a_{2}-1}{}^{\!\!\!\prime}(-1)^{\mu_{2}\delta_{K_{2},I\hspace{-.1em}I}}
\varphi_{1}^{(K_{1})}\left(a_{1}\frac{w_{2}+\mu_{2}}{a_{2}}-w_{1} \right)
A_{\mu}^{(J)}\left(z_{0} ;1,1,\frac{w_{2}+\mu_{2}}{a_{2}} \right) \nonumber \\
& \quad =\pi^{2}a_{1}a_{2}\delta_{K_{1},I}\delta_{K_{2},I}\delta_{\mu,0}  \nonumber \\
& \quad -\delta_{\mathbb{Z}}(a_{1}w_{2}-a_{2}w_{1})\sgn_{2}^{(K_{1},K_{2})}((a_{1},a_{2}),(w_{1},w_{2}),(A_{1},A_{2}))
A_{\mu}^{(J)}(z_{0} ;1,2,A_{1}w_{2}+A_{2}w_{1}) \nonumber \\
& \quad +\sgn^{(K_{1})}(z_{0} \,;a_{1},w_{1})A_{\mu +1}^{(K_{2})}(z_{0} ;a_{2},1,w_{2})
+\sgn^{(K_{2})}(z_{0} \,;a_{2},w_{2})A_{\mu +1}^{(K_{1})}(z_{0} ;a_{1},1,w_{1}) \nonumber \\
\label{eq:r2 case 2}
& \quad +\sum_{\nu =0}^{\mu}A_{\nu}^{(K_{1})}(z_{0} ;a_{1},1,w_{1})A_{\mu- \nu}^{(K_{2})}(z_{0} ;a_{2},1,w_{2}).
\end{align}
\end{thm}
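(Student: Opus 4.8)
The plan is to read off both identities from the explicit product-to-sum formula (\ref{eq:r2 case}) established in the previous theorem, using the same two devices by which Theorem\,\ref{prop:main theorem 2} was deduced from Theorem\,\ref{prop:main theorem}: the limit $z\to i\infty$ gives (1), and comparison of the Laurent coefficients of both sides at $z=z_{0}$ gives (2). In other words, this is the $r=2$, $\mathbf{m}=(1,1)$ incarnation of Theorem\,\ref{prop:main theorem 2}, specialised to the five admissible triples (\ref{eq:sgn cond}).

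For (1), I would apply $\lim_{z\to i\infty}$ to (\ref{eq:r2 case}). By (\ref{eq:limit of Phi 0}), $\lim_{z\to i\infty}\varphi_{1}^{(K)}(az-w)=-\pi i\,\delta_{K,I}$ for every $a\ge 1$, so the left-hand side tends to $-\pi^{2}a_{1}a_{2}\delta_{K_{1},I}\delta_{K_{2},I}$. On the right-hand side the constant term is unchanged, the summand carrying $\varphi_{2}^{(J)}(z-(A_{1}w_{2}+A_{2}w_{1}))$ vanishes (as $\varphi_{2}^{(J)}\to 0$), and every factor $\varphi_{1}^{(J)}(z-\,\cdot\,)$ tends to $-\pi i\,\delta_{J,I}$. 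Equating the two limits, the constants cancel and one is left with $(-\pi i\,\delta_{J,I})$ multiplied by the bracket of (\ref{eq:r2 case 1}) being zero. Since that bracket depends only on $(K_{1},K_{2})$ and each of the three pairs occurring in (\ref{eq:sgn cond}) also occurs there with $J=I$ (so that $\delta_{J,I}=1$), the bracket must vanish in all five cases, which is (\ref{eq:r2 case 1}).

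For (2), I would extract from both sides of (\ref{eq:r2 case}) the coefficient of $(z-z_{0})^{\mu}$, i.e.\ $\Res{z=z_{0}}\frac{(\,\cdot\,)}{(z-z_{0})^{\mu+1}}\,dz$. On the left, I expand each factor $a_{i}\varphi_{1}^{(K_{i})}(a_{i}z-w_{i})$ by (\ref{eq:fund prop}) with $m=1$, obtaining a simple-pole part $\sgn^{(K_{i})}(z_{0};a_{i},w_{i})(z-z_{0})^{-1}$ plus a regular part $\sum_{\nu\ge 0}A_{\nu}^{(K_{i})}(z_{0};a_{i},1,w_{i})(z-z_{0})^{\nu}$. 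Multiplying the two expansions, the coefficient of $(z-z_{0})^{\mu}$ equals the convolution $\sum_{\nu=0}^{\mu}A_{\nu}^{(K_{1})}(z_{0};a_{1},1,w_{1})A_{\mu-\nu}^{(K_{2})}(z_{0};a_{2},1,w_{2})$ together with the two cross terms $\sgn^{(K_{1})}(z_{0};a_{1},w_{1})A_{\mu+1}^{(K_{2})}(z_{0};a_{2},1,w_{2})$ and $\sgn^{(K_{2})}(z_{0};a_{2},w_{2})A_{\mu+1}^{(K_{1})}(z_{0};a_{1},1,w_{1})$. On the right, the constant $-\pi^{2}a_{1}a_{2}\delta_{K_{1},I}\delta_{K_{2},I}$ produces a factor $\delta_{\mu,0}$, while every other term has the form $c\,\varphi_{N}^{(J)}(z-\rho')$ for a scalar $c$; by (\ref{eq:fund prop}) with $a=1$, $m=N$, $w=\rho'$ its $(z-z_{0})^{\mu}$-coefficient is $c\,A_{\mu}^{(J)}(z_{0};1,N,\rho')$. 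Collecting these contributions and moving the two pole-sums to the left yields (\ref{eq:r2 case 2}).

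I expect the main obstacle to be the bookkeeping in (2): one has to handle the product of the two simple-pole Laurent series carefully so that it produces precisely the convolution and the two $\sgn\cdot A_{\mu+1}$ cross terms (and no spurious residue for $\mu\ge 0$), and one must invoke the $a=1$ specialisation of (\ref{eq:fund prop}) to identify the $(z-z_{0})^{\mu}$-coefficient of each $\varphi_{N}^{(J)}(z-\rho')$ as $A_{\mu}^{(J)}(z_{0};1,N,\rho')$. The only delicate point in (1) is the $J$-independence argument that transfers the vanishing from the $J=I$ cases to the $J=I\hspace{-.1em}I$ cases of (\ref{eq:sgn cond}).
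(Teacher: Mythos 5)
Your overall route is the paper's own (implicit) one: the theorem is stated in the paper without proof, as the $r=2$, $\mathbf{m}=(1,1)$ instance of Theorem\,\ref{prop:main theorem 2} read off from (\ref{eq:r2 case}). Your part (2) is correct: expanding both factors of the left side of (\ref{eq:r2 case}) at $z_{0}$ by (\ref{eq:fund prop}) with $m=1$, the $(z-z_{0})^{\mu}$-coefficient is the convolution $\sum_{\nu=0}^{\mu}A_{\nu}^{(K_{1})}A_{\mu-\nu}^{(K_{2})}$ plus the two cross terms $\sgn^{(K_{1})}A_{\mu+1}^{(K_{2})}+\sgn^{(K_{2})}A_{\mu+1}^{(K_{1})}$ (the product of the two simple-pole parts only affects the coefficient of $(z-z_{0})^{-2}$, so nothing spurious appears for $\mu\geq 0$), while on the right each $\varphi_{N}^{(J)}(z-\rho')$ contributes $A_{\mu}^{(J)}(z_{0};1,N,\rho')$ via the $a=1$ case of (\ref{eq:fund prop}); this is exactly the mechanism by which the paper proves Theorem\,\ref{prop:main theorem 2}\,(2).

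Part (1), however, has a genuine gap at precisely the point you flagged: the transfer from the $J=I$ triples to the $J=I\hspace{-.1em}I$ triples of (\ref{eq:sgn cond}) is invalid. The triple $(K_{1},K_{2},J)$ is not a free choice: (\ref{eq:r2 case}) descends from (\ref{eq:main result}), which holds for the Case determined by the parity of the cosecant moduli, so for $(K_{1},K_{2})=(I,I\hspace{-.1em}I)$ one has $J=I$ exactly when $a_{2}$ is even and $J=I\hspace{-.1em}I$ exactly when $a_{2}$ is odd (similarly, the parity of $a_{1}+a_{2}$ governs $(I\hspace{-.1em}I,I\hspace{-.1em}I)$). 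Hence the $J=I$ and $J=I\hspace{-.1em}I$ instances of a fixed pair $(K_{1},K_{2})$ concern \emph{disjoint} sets of parameters $(a_{1},a_{2})$, and the bracket in (\ref{eq:r2 case 1}) depends on $(a_{1},a_{2},w_{1},w_{2})$, not merely on $(K_{1},K_{2})$; its vanishing cannot be transported from one parity class to the other. Indeed it fails in the $J=I\hspace{-.1em}I$ cases: for $(K_{1},K_{2})=(I\hspace{-.1em}I,I\hspace{-.1em}I)$, $w_{1}=w_{2}=0$, $(a_{1},a_{2})=(2,3)$, the left-hand side of (\ref{eq:r2 case 1}) equals $3\cdot(-1)\pi\csc\left(\frac{3\pi}{2}\right)+2\left\{-\pi\csc\left(\frac{2\pi}{3}\right)+\pi\csc\left(\frac{4\pi}{3}\right)\right\}=3\pi-\frac{8\pi}{\sqrt{3}}\neq 0$. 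Your limit computation is sound exactly when $\delta_{J,I}=1$, i.e.\ for $(I,I,I)$, $(I,I\hspace{-.1em}I,I)$, $(I\hspace{-.1em}I,I\hspace{-.1em}I,I)$, which is the honest scope of (\ref{eq:r2 case 1}); in the Case $I\hspace{-.1em}I$ configurations the limit of (\ref{eq:r2 case}) degenerates to $0=0$ because every $\varphi_{n}^{(I\hspace{-.1em}I)}(z-\rho)\to 0$ and the constant term vanishes. (The same caveat silently affects the paper's Theorem\,\ref{prop:main theorem 2}\,(1), whose limit proof really produces a factor $\delta_{J,I}$ on the left-hand side that the displayed formula drops.)
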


\section{Concluding remarks}
We give Theorems\,\ref{prop:main theorem}, \ref{prop:main theorem 2}, which include as special cases reciprocity laws of various generalized Dedekind sums. 
Finally, as a future work, we raise a problem for an elliptic analogue of our main results.

Fukuhara and Yui derived the following formula in \cite{FY}. 
Fix a complex number $\tau$ with positive imaginary part. 
We put 
\begin{align}
\wp(z,\tau)&:=\frac{1}{z^{2}}+\sum_{\substack{\gamma \in \mathbb{Z}+\mathbb{Z}\tau  \\ \gamma \not=0}}\left\{\frac{1}{(z-\gamma)^{2}}-\frac{1}{\gamma^{2}}\right\}, \nonumber \\
\varphi(z,\tau)&:=\sqrt{\wp(z,\tau)-\wp\left(\frac{1}{2},\tau\right)}=\frac{1}{z}-\sum_{\nu \geq 0}\alpha_{\nu +1}(\tau)z^{\nu}. \nonumber
\end{align}
If $p$ and $q$ are relatively prime and $p+q$ is odd, then 
\begin{align}
\varphi(pz,\tau)\varphi(qz,\tau)&=-\frac{1}{pq}\varphi^{\prime}(z,\tau) \nonumber \\
& \quad +\frac{1}{p}\sum_{\substack{\mu, \lambda=0 \\ (\mu,\lambda) \not=(0,0)}}^{p-1}
\varphi\left(\frac{q(\mu +\lambda \tau)}{p},\tau\right)\varphi\left(z-\frac{\mu +\lambda \tau}{p},\tau\right) \nonumber \\
\label{eq:prot elliptic formula}
& \quad +\frac{1}{q}\sum_{\substack{\mu, \lambda=0 \\ (\mu,\lambda) \not=(0,0)}}^{q-1}
\varphi\left(\frac{p(\mu +\lambda \tau)}{q},\tau\right)\varphi\left(z-\frac{\mu +\lambda \tau}{q},\tau\right).
\end{align}
This formula can be regarded as an elliptic analogue of (\ref{eq:prot formula}).

Further, Egami \cite{E} provided the following reciprocity law which is an elliptic analogue of Zagier's reciprocity laws (\ref{eq:Zagier}). 
If $a_{1}, \ldots, a_{r} \in \mathbb{Z}_{\geq 0}$ are relatively prime and $a_{1}+\cdots+a_{r}$ is even, then 
\begin{equation}
\label{eq:egami}
\sum_{l=1}^{r}\sum_{\substack{\mu_{l},\lambda_{l} =0 \\ (\mu_{l},\lambda_{l}) \not=(0,0)}}^{a_{l}-1}
(-1)^{\lambda_{l}}\prod_{1\leq u\not=l\leq r}\left\{\varphi\left(a_{u}\frac{\mu_{l}+\lambda_{l}\tau}{a_{l}},\tau \right)a_{u}\right\} =-M(\tau; \mathbf{a}),
\end{equation}
where 
$$
M(\tau; \mathbf{a}):=\sum_{N=1}^{r-1}
\sum_{1\leq \lambda_{1} <\ldots<\lambda_{N} \leq r}
\sum_{\substack{r-1-N=\sum_{k=1}^{N}\nu_{\lambda_{k}}, \\ \nu_{\lambda_{1}},\ldots,\nu_{\lambda_{N}}\geq 0}}
(-1)^{N}\prod_{u=1}^{N}\{\alpha_{\nu_{u}+1}(\tau)a_{u}^{\nu_{u}+1}\}. \nonumber
$$

In this article, we obtain a generalization of (\ref{eq:prot formula}) and (\ref{eq:Zagier}). 
Therefore, we naturally consider the following problem. 
\begin{prob}
Give an elliptic analogue of Theorems\,\ref{prop:main theorem} and \ref{prop:main theorem 2}.   
\end{prob}

\section*{Acknowledgements}
We thank the anonymous referee and editor for their helpful advices. 

\bibliographystyle{amsplain}

\noindent Department of Pure and Applied Mathematics, 
Graduate School of Information Science and Technology, Osaka University, \\
1-5, Yamadaoka, Suita, Osaka, 565-0871, JAPAN.\\
E-mail: g-shibukawa@math.sci.osaka-u.ac.jp
\end{document}